\def\1{\mathds{1}}
\def\NN{\mathbb N}
\def\RR{\mathbb R}
\def\CC{\mathbb C}
\def\fcar{\mathds{1}}
\renewcommand\Re{\operatorname{Re}}
\def\diag{\textrm{diag}}
\def\ii{\text{\rm i}}
\def\esp{\mathbf E}
\def\var{\mathbf{Var}}
\def\prob{\mathbf P}
\def\calN{\mathcal N}
\def\simiid{\overset{iid}{\sim}}
\def\convloi{\xrightarrow[\sigma_*\to 0]{\mathscr D}}
\def\convproba{\xrightarrow{P}}
\def\Nzeroun{\mathcal{N}(0,1)}
\theoremstyle{plain}
\newtheorem{theorem}{Theorem} 
\newtheorem{lemma}{Lemma}
\newtheorem{proposition}{Proposition}
\newtheorem*{theorem*}{Theorem} 
\newtheorem*{lemma*}{Lemma}
\newtheorem*{proposition*}{Proposition}
\theoremstyle{remark}
\newtheorem{corollary}{Corollary}
\newtheorem{remark}{Remark}
\newtheorem*{remark*}{Remark}
\newtheorem*{note*}{Note}
\theoremstyle{definition}
\newtheorem*{definition*}{Definition}
\def\oomega{\boldsymbol{\omega}}
\def\bnu{\boldsymbol{\nu}}
\def\bY{\boldsymbol{Y}}
\def\bZ{\boldsymbol{Z}}
\def\bX{\boldsymbol{X}}
\def\bB{\boldsymbol{B}}
\def\bS{\boldsymbol{S}}
\def\boldf{\boldsymbol{f}}
\def\ee{\boldsymbol{e}}
\def\bu{\boldsymbol{u}}
\def\by{\boldsymbol{y}}
\def\bc{\boldsymbol{c}}
\def\bs{\boldsymbol{s}}
\def\ssigma{\boldsymbol{\sigma}}
\def\nnu{\boldsymbol{\nu}}
\def\eps{\epsilon}
\def\eeps{\boldsymbol{\epsilon}}
\def\diese{\text{\tt\#}}
\def\etadiese{\eta^{\diese}}
\def\sigmadiese{\sigma^{\diese}}
\def\Ydiese{Y^{\diese}}
\def\bYdiese{{\boldsymbol{Y}^{\diese}}}
\def\epsdiese{\epsilon^{\diese}}
\def\cdiese{c^\diese}
\def\bcdiese{\boldsymbol{c}^\diese}
\def\calF{\mathcal{F}}
\begin{document}

\begin{frontmatter}

\title{Curve registration by nonparametric goodness-of-fit testing}

\author{Olivier Collier and Arnak S. Dalalyan}
\address{ENSAE ParisTech/CREST/GENES\\ 3 avenue P.\ Larousse\\ 92245 Malakoff, FRANCE}

\begin{abstract}
The problem of curve registration appears in many different areas of applications ranging from
neuroscience to road traffic modeling. In the present work\footnote{This paper was presented in part at the AI-STATS 2012 conference.},
we propose a nonparametric testing framework in which
we develop a generalized likelihood ratio test to perform curve registration. We first prove that,
under the null hypothesis,  the resulting test statistic is asymptotically distributed as a chi-squared random
variable. This result, often referred to as Wilks' phenomenon, provides a natural threshold for the test of a
prescribed asymptotic significance level and a natural measure of lack-of-fit in terms of the $p$-value of the $\chi^2$-test.
We also prove that the proposed test is consistent, \textit{i.e.}, its power is asymptotically equal
to $1$. Finite sample properties of the proposed methodology are demonstrated by numerical simulations. As an application,
a new local descriptor for digital images is introduced and an experimental evaluation of its discriminative power is conducted.
\end{abstract}

\begin{keyword}
{nonparametric inference}\sep
{hypotheses testing}\sep
{Wilks' phenomenon}\sep
{keypoint matching}
\end{keyword}

\end{frontmatter}

\section*{Introduction}
Boosted by applications in different areas such as biology, medicine, computer vision and road traffic forecasting, the problem
of curve registration and, more particularly, some aspects of this problem related to nonparametric and semiparametric estimation,
have been explored in a number of recent statistical studies.
In this context, the model used for deriving statistical inference represents the input data as a finite collection of noisy signals such that
each input signal is obtained from a given signal, termed mean template or structural pattern, by a parametric deformation and by
adding a white noise. Hereafter, we refer to this as the \textit{deformed mean template} model. The main difficulties for
developing statistical inference in this problem are caused by the nonlinearity of the deformations and the fact that not only the
deformations but also the mean template used to generate the observed data are unknown.

While the problems of estimating the mean template and the deformations was thoroughly investigated in
recent years, the question of the adequacy of modeling the available data by the deformed mean template model received little
attention. By the present work, we intend to fill this gap by introducing a nonparametric goodness-of-fit
testing framework that allows us to propose a measure of appropriateness of a deformed mean template model. To this end, we focus
our attention on the case where the only allowed deformations are translations and propose a measure of
goodness-of-fit based on the $p$-value of a chi-squared test.

\subsection*{Model description}

We consider the case of functional data, that is each observation is a function on a fixed interval, taken for simplicity
equal to $[0,1]$. More precisely, assume that two independent samples, denoted $\{X_i\}_{i=1,\ldots,n}$ and $\{X_i^\diese\}_{i=1,\ldots,n^\diese}$,
of functional data are available such that within each sample the observations are independent identically distributed (i.i.d.\ ) drifted and scaled
Brownian motions. Let $f$ and $f^\diese$ be the corresponding drift functions: $f(t)={d\esp[X_1(t)]}/{dt}$ and $f^\diese(t)={d\esp[X_1^\diese(t)]}/{dt}$.
Then, for $t\in[0,1]$,
$$
X_i(t)=\int_0^t f(u)\,du+sB_i(t),\quad X_\ell^\diese(t)=\int_0^t f^\diese(u)\,du+s^\diese B_\ell^\diese(t),
$$
where $s,s^\diese>0$ are the scaling parameters and  $(B_1,\ldots,B_n,B_1^\diese,\ldots,B_{n^\diese}^\diese)$ are independent Brownian motions.
Since we assume that the entire paths are observed, the scale parameters $s$ and $s^\diese$
can be recovered with arbitrarily small error using the quadratic variation. So, in what follows,
these parameters are assumed to be known (an extension to the setting of unknown noise level is briefly discussed in
Section~\ref{sec:ext}).

The goal of the present work is to provide a statistical testing procedure for deciding
whether the curves of the functions $f$ and $f^\diese$ coincide up to a translation. Considering periodic extensions of $f$ and $f^\diese$ on the whole real
line, this is equivalent to checking the null hypothesis
\begin{equation}\label{eq:3}
\mathbf{H_0}:\qquad \exists\ (\tau^*,a^*)\in [0,1]\times\RR\qquad \text{ such that\ }\qquad f(\cdot)=f^\diese(\cdot+\tau^*)+a^*.
\end{equation}
If the null hypothesis is satisfied, we are in the set-up of a deformed mean template model, where $f(\cdot)$ plays the role of the mean template and
spatial translations represent the set of possible deformations.

Starting from~\cite{Golubev88} and \cite{Kneip92}, semiparametric and nonparametric estimation in different
instances of the deformed mean template model have been intensively investigated  \cite{Ronn01,Dalalyan06,Gamboa07,Dalalyan08,Castillo09,BigotGadat10,TriganoRitov,
Castillo11,Hardle90,Carroll92,Vimond10,Castillo07,BigotVim09} with applications to image warping \citep{Glasbey01,Bigotetal09}. However, prior
to estimating the common template, the deformations or any other object involved in a deformed mean template model, it is natural to check its appropriateness, which is the
purpose of this work.

To achieve this goal, we first note that the pair of sequences of complex-valued random variables
$\bY=(Y_0,Y_1,\ldots)$ and $\bY^\diese=(Y_0^\diese,Y_1^\diese,\ldots)$, defined by
\begin{align*}
\big[Y_j,Y_j^\diese\big]=\int_0^1 e^{2\pi \ii jt} \,d\bigg[\frac1{n}\sum_{i=1}^n X_i(t), \frac1{n^\diese}\sum_{\ell=1}^{n^\diese} X_\ell^\diese(t)\bigg],
\end{align*}
is a sufficient statistic in the model generated by observations $(X_1,\ldots,X_n)$ and  $(X_1^\diese,\ldots,X_{n^\diese}^\diese)$. Therefore,
without any loss of information, the initial (functional) data can be replaced by the transformed data $(\bY,\bY^\diese)$. Let us
denote by $c_j=\int_0^1 f(x)\,e^{2\ii j\pi x }\,dx$ and  $\cdiese_j=\int_0^1 f^\diese(x)\,e^{2\ii j\pi x }\,dx$ the complex Fourier
coefficients of the signals $f$ and $f^\diese$. Then, the first components of the observed sequences, $(Y_0,Y^\diese_0)$, can be written as
\begin{equation*}
Y_0 = c_0 + \frac{s}{\sqrt{n}}\;\epsilon_0,\quad \Ydiese_0 = \cdiese_0 + \frac{s^\diese}{\sqrt{n^\diese}}\;\epsdiese_0,
\end{equation*}
where $\epsilon_0$ and $\epsdiese_0$ are two independent, real, standard Gaussian variables.
Furthermore, for $j\ge 1$, we have
\begin{equation}\label{eq:5}
Y_j = c_j + \frac{s}{\sqrt{2n}}\;\epsilon_j,\quad \Ydiese_j = \cdiese_j + \frac{s^\diese}{\sqrt{2n^\diese}}\;\epsdiese_j,
\end{equation}
where the complex valued random variables $\epsilon_j$, $\epsdiese_j$ are i.i.d.\ standard Gaussian: $\epsilon_j, \epsdiese_j
\sim\mathcal N_{\mathbb C}(0,1)$, which means that their real and imaginary parts are independent $\mathcal N(0,1)$ random variables.
Moreover, $(\epsilon_0, \epsdiese_0)$ are independent of $\{(\epsilon_j,\epsdiese_j):j\ge1\}$.
In what follows, we will use boldface letters for denoting
vectors or infinite sequences so that, for example, $\bc$ and $\bcdiese$ refer to $\{c_j; j=0,1,\ldots\}$ and
$\{\cdiese_j; j=0,1,\ldots\}$, respectively.

Under the mild assumption that $f$ and $f^\diese$ are squared integrable, the likelihood ratio of the Gaussian process $\bY^{\bullet,\diese}=(\bY,\bY^\diese)$
is well defined. Using the notation $\bc^{\bullet,\diese}=(\bc,\bc^\diese)$, $\sigma=s/\sqrt{2n}$ and $\sigma^\diese=s^\diese/\sqrt{2n^\diese\!}$, the corresponding negative log-likelihood is given by
\begin{equation}\label{eq:8}
\ell(\bY^{\bullet,\diese},\bc^{\bullet,\diese})
	= \frac{(Y_0-c_0)^2}{4\sigma^2}+\frac{(Y^\diese_0-c^\diese_0)^2}{4\sigmadiese{}^2}+
		\sum_{j\ge 1}\bigg( \frac{|Y_j-c_j|^2}{2\sigma^2}+\frac{|Y_j^\diese-c^\diese_j|^2}{2\sigmadiese{}^2}\bigg).
\end{equation}
In the present work, we present a theoretical analysis of the penalized likelihood ratio test  in the asymptotics of large samples, \textit{i.e.}, when both $n$ and $n^\diese$
tend to infinity, or equivalently, when $\sigma$ and $\sigma^\diese$ tend to zero. The finite sample properties are examined through numerical simulations.

\subsection*{Some motivations}
Even if the shifted curve model is a very particular instance of the general deformed mean template model, it plays a central role in several
applications.  To cite a few of them:
\parskip=-2pt
\begin{description}\itemsep=2pt
\item[ECG interpretation:]An~electro-cardiogram~(ECG) can be seen as a collection of replica of nearly the same signal, up to a time shift.
Significant information about heart malformations or diseases can be extracted from the mean signal if we are able to align the available curves.
For more details we refer to \cite{TriganoRitov}.
\item[Road traffic forecast:] In \cite{Loubes06}, a road traffic forecasting procedure is introduced. To this end, archetypes of the different
types of road trafficking behavior on the Parisian highway network are built, using a hierarchical classification method. In each obtained
cluster, the curves all represent the same events, only randomly shifted in time.
\item[Keypoint matching:] An important problem in computer vision is to decide whether two points in a same image or in two different images
correspond to the same real-world point. The points in images are then usually described by the regression function of the magnitude of the
gradient over the direction of the gradient of the image  restricted to a given neighborhood (cf.\ \cite{Lowe}). The
methodology we shall develop in the present paper allows to test whether two points in images coincide, up to a rotation and an illumination
change, since a rotation corresponds to shifting the argument of the regression function by the angle of the rotation.
\end{description}
\parskip=3pt
\vspace{-10pt}

\subsection*{Relation to previous work}

The problem of estimating the parameters of the deformation is a semiparametric one, since the deformation involves a finite number of parameters that
have to be estimated by assuming that the unknown mean template is merely a nuisance parameter. In contrast, the testing problem we are concerned with
is clearly nonparametric. The parameter describing the probability distribution of the observations is infinite-dimensional not only under the alternative
but also under the null  hypothesis. Surprisingly, the statistical literature on this type of testing problems is very scarce. Indeed, while \cite{Horowitz01}
analyzes the optimality and the adaptivity of testing procedures in the setting of a parametric null hypothesis against a nonparametric alternative,
to the best of our knowledge, the only papers concerned with nonparametric null hypotheses are \citep{Baraudetal03,Baraudetal05} and \citep{Pouet05}.
Unfortunately, the results derived in \citep{Baraudetal03,Baraudetal05} are inapplicable in our set-up since the null hypothesis in our problem is neither linear nor convex. The set-up of \citep{Pouet05} is closer to ours. However, they only investigate the minimax rates of separation without providing the asymptotic distribution of the proposed test statistic, which generally results in an overly conservative testing procedure.
Furthermore, their theoretical framework comprises a condition on the sup-norm-entropy of the null hypothesis, which is irrelevant in our set-up and may be violated.

There is also a relatively vast literature on the nonparametric comparison of several curves (see \citep{Munk_Dette,Neumeyer,PardoFernandez,Srihera} and the references therein). The results developed in most of these papers concern the regression model with random design and assume that a noisy version of each curve is observed. The tests proposed therein are mainly based on kernel smoothing, which are quite different from the tests analyzed in the present paper. In particular, tests based on kernel smoothing may achieve optimal rates only for smoothness classes with regularity less than $1$. Note also that when transposed to the model of Gaussian white noise, the problem of testing the equality of two functions boils down
to that of testing that a function vanishes everywhere, just by computing the difference of observed noisy signals. Thus the null hypothesis becomes
simple.

In a companion\footnote{The writing of the paper \cite{Collier2012} being completed slightly later than the present one, it has been published earlier because of the randomness of the reviewing process.} paper \citep{Collier2012}, the problem of curve registration by statistical hypotheses testing
has been tackled from an asymptotic minimax point of view. In \citep{Collier2012}, as a complement of the present work,  minimax rates of separation
(up to $\log$ factors) are established and a smoothness-adaptive test is proposed under some assumptions, which are substantially stronger than those
required in this paper. Note also that preliminary versions of some results reported in the present manuscript have been presented at AI-STATS conference \citep{jmlr_DalalyanC12}. The present manuscript is a corrected, completed and significantly developed version of \citep{jmlr_DalalyanC12}. More precisely, Section~\ref{sec:ext} contains important extensions of the presented methodology to the case of multidimensional signals
and unknown noise variance, whereas Section~\ref{sec:LoFT} presents an original application to the problem of keypoint matching in computer vision.

\subsection*{Our contribution}

We adopt, in this work, the approach based on the generalized likelihood ratio tests, cf.~\cite{Fan07} for a comprehensive account on the topic. The advantage
of this approach is that it provides a general framework for constructing testing procedures which asymptotically achieve the prescribed significance level for the
type I error and, under mild conditions, have a power that tends to one. It is worth mentioning that in the context of nonparametric testing, the use  of the
\textit{generalized} likelihood ratio leads to a substantial improvement upon the likelihood ratio, very popular in parametric statistics. In simple words,
the generalized likelihood allows to incorporate some prior information on the unknown signal in the test statistic which introduces more flexibility and
turns out to be crucial both in theory and in practice, see \cite{FanZhang01}.

We prove that under the null hypothesis the generalized likelihood ratio test statistic is asymptotically distributed as a $\chi^2$-random variable. This allows
us to choose a threshold that makes it possible to asymptotically control the test significance level without being excessively conservative. Such results
are referred to as Wilks' phenomena. In this relation, let us quote \cite{Fan07}: ``While we have observed the Wilks' phenomenon and demonstrated it
for a few useful cases, it is impossible for us to verify the phenomenon for all nonparametric hypothesis testing problems. The Wilks' phenomenon
needs to be checked for other problems that have not been covered in this paper. In addition, most of the topics outlined in the above discussion
remains open and are technically and intellectually challenging. More developments are needed, which will push the core of statistical theory and
methods forward.''

It is noteworthy that our results apply to the Gaussian sequence model (\ref{eq:5}), which is often seen as a prototype of nonparametric
statistical model. In fact, it is provably asymptotically equivalent to many other statistical models \citep{BrownLow96,Nussbaum96,GramaNussbaum,DR07,Reiss}
and captures most theoretical difficulties of the statistical inference. Furthermore, using the aforementioned results on asymptotic equivalence,
the main theoretical findings of the present paper automatically carry over the nonparametric regression model, the density model, the ergodic diffusion model,
\textit{etc}.

Finally, we provide a detailed explanation of how the proposed methodology can be used for solving the problem of keypoint matching in digital images. This leads
to a new descriptor termed Localized Fourier Transform which is particularly well adapted for testing for rotation. The first experiments reported in this
work show the validity of our theoretical findings and the potential of the new descriptor.

\subsection*{Organization}

The rest of the paper is organized as follows. After a brief presentation of the model, we introduce the generalized likelihood ratio framework in Section~\ref{sec:1}.
The main results characterizing the asymptotic behavior of the proposed testing procedure, based on generalized likelihood ratio testing for a large variety of shrinkage
weights, are stated in Section~\ref{sec:2}. Section~\ref{sec:ext} contains extensions of our results to the multidimensional setting and to the case of unknown noise
magnitude. Some numerical examples illustrating the theoretical results are included in Section~\ref{sec:3}, while Section~\ref{sec:LoFT} is devoted to the application
of the proposed methodology to the problem of keypoint matching in computer vision. The resulting Localized Fourier Transform (LoFT) descriptor is tested on a pair
of real images degraded by white Gaussian noise. The proofs of the lemmas and of the theorems are postponed to the Appendix.


\section{Penalized Likelihood Ratio Test}\label{sec:1}

We are interested in testing the hypothesis (\ref{eq:3}), which translates in the Fourier domain to
\begin{equation*}
\mathbf{H_0} :\  \text{there exists} \quad \bar\tau^* \in [0,2\pi[\quad \text{such that}\quad
c_j = e^{-\ii j \bar\tau^*} \cdiese_j\quad \text{for all } j\ge 1.
\end{equation*}
Indeed, one easily checks that if (\ref{eq:3}) is true, then\footnote{We use here the change of the variable $z=t-\tau^*$ and the fact that the integral of a 1-periodic function on an
interval of length one does not depend on the interval of integration.}
$$
c_j^\diese=\int_0^1 f(t-\tau^*)e^{2\ii j\pi t}\,dt=e^{2\ii j\pi\tau\!^*} \int_0^1 f(z) e^{2\ii j\pi z}\,dz=e^{2\ii j\pi\tau\!^*} c_j
$$
and, therefore, the aforementioned relation holds with $\bar\tau^*= 2\pi\tau^*$. If no additional assumptions are imposed on the functions $f$ and $f^\diese$, or equivalently on their Fourier
coefficients $\bc$ and $\bcdiese$, the nonparametric testing problem has no consistent solution. A natural assumption widely used in
nonparametric statistics is that $\bc=(c_0,c_1,\ldots)$ and $\bcdiese=(\cdiese_0,\cdiese_1,\ldots)$ belong to some Sobolev ball
$$
\calF_{s,L}=\Big\{\bu=(u_0,u_1,\ldots): \sum_{j=0}^{+\infty} j^{2s} |u_j|^2 \leq L^2\Big\},
$$
where the positive real numbers $s$ and $L$ stand for the smoothness and the radius of the class $\calF_{s,L}$.

Since we will also aim at establishing the (uniform) consistency of the proposed testing procedure, we need to precise the
form of the alternative. It seems that the most compelling form for the null and the alternative is
\begin{equation}\label{eq:7}
\begin{cases}
\mathbf{H_0} :&\text{there exists} \, \bar\tau^*\in[0,2\pi[  \text{ such that }
c_j = e^{-\ii j \bar\tau^*} \cdiese_j\quad \text{for all } j\ge 1.\\
\mathbf{H_1} :& \inf_{\tau} \sum_{j=1}^{+\infty} |c_j-e^{-\ii j\tau}c_j^\diese|^2\ge \rho
\end{cases}
\end{equation}
for some $\rho>0$. In other terms, under $\mathbf{H_0}$ the graph of the function $f^\diese$ is obtained from that of $f$ by a translation.
To ease notation, we will use the symbol $\circ$ to denote coefficient-by-coefficient multiplication, also known as the Hadamard product,
and $\ee(\tau)$ will stand for the sequence $(e^{-\ii\tau},e^{-2\ii\tau},\ldots)$.

To present the penalized likelihood ratio test, which is a variant of the generalized likelihood ratio test, we introduce a penalization in terms of weighted
$\ell^2$-norm of $\bc^{\bullet,\diese}$. In this context, the choice of the $\ell^2$-norm penalization is mainly motivated by the fact that
Sobolev regularity assumptions are made on the functions $f$ and $f^\diese$. For a sequence of non-negative real numbers, $\oomega$, we
define the weighted $\ell_2$ norm $\|\bc\|_{\oomega,2}^2=\sum_{j\geq 0}\omega_j|c_j|^2$. We will also use the standard notation
$\|\bu\|_p=(\sum_{j} |u_j|^p)^{1/p}$ for any $p>0$. Using this notation, the penalized log-likelihood is given by
\begin{align}
p\ell(\bY^{\bullet,\diese},\bc^{\bullet,\diese})=& \ell(\bY^{\bullet,\diese},\bc^{\bullet,\diese})+
\frac{\|\bc\|_{\oomega,2}^2}{2\sigma^2}+
\frac{\|\bc^\diese\|_{\oomega,2}^2}{2\sigmadiese{}^2}\:.\label{eq:9}
\end{align}
The resulting penalized likelihood ratio test is based on the test statistic
\begin{align}
\Delta(\bY^{\bullet,\diese})=&\min_{\bc^{\bullet,\diese}:\mathbf{H_0}\text{ is true}}p\ell(\bY^{\bullet,\diese},\bc^{\bullet,\diese})-\min_{\bc^{\bullet,\diese}}p\ell(\bY^{\bullet,\diese},\bc^{\bullet,\diese}).\label{eq:10}
\end{align}
It is clear that $\Delta(\bY^{\bullet,\diese})$ is always non-negative. Furthermore, it is small when $\mathbf{H_0}$ is satisfied and is large
if $\mathbf{H_0}$ is violated. Therefore, $\Delta(\bY^{\bullet,\diese})$ is a good test statistic for deciding whether or not the null hypothesis $\mathbf{H_0}$ should be rejected.

\begin{lemma}\label{lem:0}
The test statistic $\Delta(\bY^{\bullet,\diese})$ can be written in the following form:
\begin{equation}\label{eq:11}
\Delta(\bY^{\bullet,\diese})=\frac{1}{2(\sigma^2+(\sigmadiese)^2)}\min_{\tau\in[0,2\pi]}
\sum_{j=1}^{+\infty} \frac{|Y_j-e^{-\ii j\tau}Y_j^\diese|^2}{1+\omega_j}.
\end{equation}
\end{lemma}

\begin{proof}
We start by noting that the minimization of the quadratic functional (\ref{eq:9}) leads to
\begin{equation}\label{eqq:1}
\min_{\bc^{\bullet,\diese}}p\ell(\bY^{\bullet,\diese},\bc^{\bullet,\diese})=\frac1{2\sigma^2}\sum_{j\geq1} \frac{\omega_j}{1+\omega_j}\,|Y_j|^2+
\frac1{2\sigmadiese{}^2}\sum_{j\geq1} \frac{\omega_j}{1+\omega_j}\,|Y^\diese_j|^2.
\end{equation}
Let us compute the statistic $\min_{\bc^{\bullet,\diese}:\mathbf{H_0}\text{ is true}}p\ell(\bY^{\bullet,\diese},\bc^{\bullet,\diese})$
that can be equivalently written in the form $\min_{\tau\in[0,2\pi]}\min_{c_j =
e^{-\ii j\bar\tau^*}c^\diese_j,j\ge1}p\ell(\bY^{\bullet,\diese},\bc^{\bullet,\diese})$.
When $\bc^{\bullet,\diese}$ satisfies the null hypothesis, simple algebra yields that the relation
$$
p\ell(\bY^{\bullet,\diese},\bc^{\bullet,\diese}) =
\sum_{j\ge 1} \bigg(\frac{|Y_j-c_j|^2}{2\sigma^2} +\frac{|e^{-\ii j \bar\tau^*}Y^\diese_j-c_j|^2}{2\sigma^\diese{}^2}\bigg)
+\frac{\|\bc\|_{\oomega,2}^2}{2\sigma^2}+\frac{\|\bc\|_{\oomega,2}^2}{2\sigma^\diese{}^2}
$$
is true for some $\bar\tau^*$. We need to compute the minimum with respect to $\bc$ of the right-hand side of the last
display. To ease notation, let us set $\bar\sigma^2 = 1/(\frac1{\sigma^2}+\frac1{\sigmadiese{}^2})$ and
$\bZ = \sigma^{-2}\bY+\sigmadiese{}^{-2}\ee(\bar\tau^*)\circ \bY^\diese$. Then, we get
$$
p\ell(\bY^{\bullet,\diese},\bc^{\bullet,\diese}) = \sum_{j\ge 1} \bigg(\frac{|Y_j|^2}{2\sigma^2} +\frac{|Y^\diese_j|^2}{2\sigma^\diese{}^2}+\frac{1+\omega_j}{2\bar\sigma^2}\;|c_j|^2 - \Re(\bar Z_j c_j)\bigg).
$$
The minimum of this expression is attained at $c_j = \frac{\bar\sigma^2 Z_j}{1+\omega_j}$. This leads to
\begin{equation}\label{eqq:2}
\min_{\bc^{\bullet,\diese}:\mathbf{H_0}\text{ is true}}p\ell(\bY^{\bullet,\diese},\bc^{\bullet,\diese})
= \sum_{j\ge 1} \bigg(\frac{|Y_j|^2}{2\sigma^2} +\frac{|Y^\diese_j|^2}{2\sigma^\diese{}^2}-\frac{\bar\sigma^2}{2(1+\omega_j)}\;|Z_j|^2\bigg).
\end{equation}
Combining equations (\ref{eqq:1}) and (\ref{eqq:2}) we get
$$
\Delta(\bY^{\bullet,\diese})=\frac{1}{2\sigma^2}\sum_j \frac{|Y_j|^2}{1+\omega_j}+
\frac{1}{2\sigmadiese{}^2}\sum_j \frac{|Y^\diese_j|^2}{1+\omega_j}-\sum_j\frac{\bar\sigma^2}{2(1+\omega_j)}\;|Z_j|^2.
$$
To complete the proof, it suffices to replace $\bZ$ by its definition and to use the identity
$\sigma^{-2}-\bar\sigma^2 \sigma^{-4} = \sigmadiese{}^{-2}-\bar\sigma^2 \sigmadiese{}^{-4} = (\sigma^2+\sigmadiese{}^2)^{-1}$.
\end{proof}
From now on, it will be more convenient to use the notation $\nu_j=1/(1+\omega_j)$. The elements of the sequence $\bnu=\{\nu_j;\ j\ge 1\}$
are hereafter referred to as shrinkage weights. They are allowed to take any value between $0$ and $1$. Even the value 0 will be
authorized, corresponding to the limiting case when $w_j=+\infty$, or equivalently to our belief that the corresponding Fourier coefficient is $0$. The test statistic can then be written as:
\begin{equation}\label{eq:12}
\Delta(\bY^{\bullet,\diese})=\frac{1}{2(\sigma^2+(\sigmadiese)^2)}\min_{\tau\in[0,2\pi]}
\|\bY-\ee(\tau)\circ \bY^\diese\|^2_{2,\bnu}\,,
\end{equation}
and one of the goals is to find the asymptotic distribution of this quantity under the null hypothesis.

\section{Main results}\label{sec:2}

The test based on the generalized likelihood ratio statistic involves a sequence $\bnu$, which should be chosen by the user. However, we are able to provide
theoretical guarantees only under some conditions on these weights. To state these conditions, we set $\sigma_*=\max(\sigma,\sigmadiese)$ and choose a positive
integer $N=N_{\sigma_*}\ge 2$, which represents the number of Fourier coefficients involved in our testing procedure. In addition to requiring that $0\le\nu_j\le 1$ for every $j$, we assume that:
\begin{align*}
~&\textbf{(A)}\quad\nu_1=1,\qquad \text{and} \qquad \nu_j = 0,\ \forall j > N_{\sigma_*}, \phantom{\sum\limits_{j\geq0}}\qquad\qquad~ \\[-10pt]
~&\textbf{(B)}\quad  \text{for some positive constant $\underline c$, it holds that } \sum\limits_{j\geq 1} \nu_j^2 \ge \underline cN_{\sigma_*}.\qquad\qquad
\end{align*}
Moreover, we will use the following condition in the proof of the consistency of the test:
\begin{equation*}
~\textbf{(C)} \quad \exists \, \overline c> 0, \,\text{such that } \min\{j\geq0,\nu_j<\overline c\} \to +\infty,\ \text{as } \sigma_*\to 0.\qquad\qquad
\end{equation*}
In simple words, this condition implies that the number of terms $\nu_j$ that are above a given strictly positive level goes to $+\infty$
as $\sigma_*$ converges to $0$. If $N_{\sigma_*}\to+\infty$ as $\sigma_*\to 0$, then all the aforementioned conditions are satisfied for the shrinkage
weights $\bnu$ of the form $\nu_{j+1}=h(j/N_{\sigma_*})$, where $h:\RR\to[0,1]$ is an integrable function, supported on $[0,1]$, continuous in $0$ and
satisfying $h(0)=1$. The classical examples of shrinkage weights include:
\begin{equation}\label{weights}
\nu_j =
\begin{cases}
\begin{tabular}{lll}
$\mathds{1}_{\{j \leq N_{\sigma_*}\}}$,& &(projection weight)\\[8pt]
$\big\{1+\big(\frac{j}{\kappa N_{\sigma_*}}\big)^\mu\big\}^{-1} \mathds{1}_{\{j \leq N_{\sigma_*}\}},$ & $\kappa>0$, $\mu > 1$, &  (Tikhonov weight)\\[8pt]
$\big\{1-\big(\frac{j}{N_{\sigma_*}}\big)^\mu\big\}_+,\,$ & $\mu > 0$. & (Pinsker weight)
\end{tabular}
\end{cases}
\end{equation}
Note that condition {\bf(C)} is satisfied in all these examples with $\overline c=0.5$, or any other value in $(0,1)$. Here on,
we write $\Delta_{\bnu,\sigma_*}(\bY^{\bullet,\diese})$ instead of $\Delta(\bY^{\bullet,\diese})$ in order to stress its dependence 
on $\bnu$ and on $\sigma_*$.

\begin{theorem}\label{th:1}
Let  $\bc\in\calF_{1,L}$ and $|c_1| >0$. Assume that the shrinkage weights $\nu_j$ are chosen to satisfy conditions {\bf(A)}, {\bf(B)}, $N_{\sigma_*}\to+\infty$ and $\sigma_*^2 N_{\sigma_*}^{5/2}\log(N_{\sigma_*})\to 0$. Then, under the null hypothesis, the test statistic $\Delta_{\bnu,\sigma_*}(\bY^{\bullet,\diese})$ is asymptotically
distributed as a Gaussian random variable:
\begin{equation}\label{conv}
\frac{\Delta_{\bnu,\sigma_*}(\bY^{\bullet,\diese})-\|\bnu\|_1}{\|\bnu\|_2}\convloi\mathcal N(0,1).
\end{equation}
\end{theorem}

The main outcome of this result is a test of hypothesis $\mathbf{H_0}$ that is asymptotically of a prescribed significance level $\alpha\in(0,1)$. Indeed,
let us define the test that rejects $\mathbf{H_0}$ if and only if
\begin{equation}\label{eq:13}
\Delta_{\bnu,\sigma_*}(\bY^{\bullet,\diese})\ge \|\bnu\|_1 +  z_{1-\alpha}\|\bnu\|_2,
\end{equation}
where $z_{1-\alpha}$ is the $(1-\alpha)$-quantile of the standard Gaussian distribution.

\begin{corollary}
The test of hypothesis $\mathbf{H_0}$ defined by the critical region (\ref{eq:13}) is asymptotically of significance level $\alpha$.
\end{corollary}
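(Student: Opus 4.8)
The plan is to read the corollary off directly from Theorem~\ref{th:1}, since the critical value in (\ref{eq:13}) was engineered precisely to match the asymptotic Gaussian law. First I would recall that the asymptotic significance level of the test is, by definition, the limit as $\sigma\to 0$ of the probability of rejecting $H_0$ when $H_0$ actually holds. Under $H_0$, this rejection probability is
\begin{equation*}
\prob\Big(\Delta_\sigma(\bY^{\bullet,\#})\ge 4\|\bnu\|_1 + 4 z_{1-\alpha}\|\bnu\|_2\Big),
\end{equation*}
and the natural first step is to subtract $4\|\bnu\|_1$ from both sides of the defining inequality and divide by $4\|\bnu\|_2$, which is strictly positive (indeed $\nu_1=1$ by condition {\bf(A)}, so $\|\bnu\|_2\ge 1$). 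The event is thereby rewritten as $\{(\Delta_\sigma(\bY^{\bullet,\#})-4\|\bnu\|_1)/(4\|\bnu\|_2)\ge z_{1-\alpha}\}$, whose probability involves exactly the standardized statistic appearing in Theorem~\ref{th:1}.

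The second step invokes Theorem~\ref{th:1}: under $H_0$ the standardized statistic converges in distribution to a standard Gaussian variable $Z\sim\Nzeroun$. Consequently the rejection probability converges to $\prob(Z\ge z_{1-\alpha})$. Since $z_{1-\alpha}$ is by definition the $(1-\alpha)$-quantile of $\Nzeroun$, this tail probability equals $1-(1-\alpha)=\alpha$, which is the asserted significance level.

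The only point requiring the slightest care is the passage from convergence in distribution to convergence of this particular tail probability at the fixed threshold $z_{1-\alpha}$. This is legitimate because the limiting law is continuous, so every real number---in particular $z_{1-\alpha}$---is a continuity point of the limiting cumulative distribution function, and convergence in distribution then yields convergence of the distribution functions (hence of the complementary tails) at that point. There is no genuine obstacle here: the entire substance of the argument is already contained in Theorem~\ref{th:1}, and the corollary reduces to a one-line consequence once the standardization and the continuity remark are made explicit.
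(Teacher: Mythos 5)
Your proof is correct and is exactly the argument the paper intends: the paper states the corollary as an immediate consequence of Theorem~\ref{th:1}, and your standardization of the rejection event plus the continuity of the Gaussian limit law is the standard (and only) step needed. Nothing is missing.
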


\begin{remark}
Let us consider the case of projection weights $\nu_j=\mathds{1}(j\le N_{\sigma_*})$.
One can reformulate the asymptotic relation stated in Theorem~\ref{th:1} by claiming that
$2\Delta_{\bnu,\sigma_*}(\bY^{\bullet,\diese})$ is approximately $\mathcal N(2N_{\sigma_*},4N_{\sigma_*})$
distributed. Since the latter distribution approaches the chi-squared distribution (as $N_{\sigma_*}\to\infty$),
we get:
$$
2\Delta_{\bnu,\sigma_*}(\bY^{\bullet,\diese})\ \stackrel{\mathcal D}{\approx}\ \chi^2_{2N_{\sigma_*}}\,,\qquad\text{as}\qquad \sigma_*\to 0.
$$
In the case of general shrinkage weights satisfying the assumptions stated in the beginning of this section, an analogous
relation holds as well:
$$
\frac{2\|\bnu\|_1}{\|\bnu\|_2^2}\;\Delta_{\bnu,\sigma_*}(\bY^{\bullet,\diese})\ \stackrel{\mathcal D}{\approx}\  \chi^2_{2\|\bnu\|_1^2/\|\bnu\|_2^2}\,,\qquad\text{as}\qquad \sigma_*\to 0.
$$
This type of results are often referred to as Wilks' phenomenon.
\end{remark}

The proof of Theorem~\ref{th:1} is rather technical and, therefore, is deferred to the Appendix. Let us simply mention here
that we present the proof in a slightly more general case $\bc\in\calF_{s,L}$ with a smoothness $s\in(0,1]$. It appears
from the proof that the convergence stated in (\ref{conv}) holds if $s>7/8$, $N_{\sigma_*}\to\infty$ and
$\sigma_*^2 N_{\sigma_*}^{-2s+9/2}\log(N_{\sigma_*})\to 0$, as $\sigma_*\to 0$. We do not know whether the last condition on
$N_{\sigma_*}$ can be avoided by using other techniques, but it seems that in our proof there is no room for improvement
in order to relax this assumption. At a heuristic level, it is quite natural to avoid choosing $N_{\sigma_*}$ too large. Indeed,
large $N_{\sigma_*}$ leads to undersmoothing in the problem of estimating the quadratic functional $\|\bc-\ee(\tau)\circ\bc^\diese\|_2^2$.
Therefore, the test statistic corresponds to registration of two curves in which the signal is dominated by noise, which is clearly
not a favorable situation for performing curve registration.

\begin{remark}
The $p$-value of the aforementioned test based on the Gaussian or chi-squared approximation can be used as a measure
of the goodness-of-fit or, in other terms, as a measure of alignment for the pair of curves under consideration. If
the observed two noisy curves lead to the data $\by^{\bullet,\diese}$, then the (asymptotic) $p$-value is defined as
$$
\alpha^*=\Phi\Big(\frac{\Delta_{\bnu,\sigma_*}(\by^{\bullet,\diese})-\|\bnu\|_1}{\|\bnu\|_2}\Bigg),
$$
where $\Phi$ stands for the c.d.f.\ of the standard Gaussian distribution.
\end{remark}

Note that a similar result in the model of regression has been established by \citep{Hardle90}[Theorem 3]. Their results are
based on another test statistics, involving kernel smoothing, and hold under more stringent assumptions, like the existence of
a uniformly continuous second-order derivative of the regression function. Remark also that under these assumptions, the test
presented in  \citep{Hardle90} is definitely not rate optimal in the sense of minimax rate of separation \citep{Ingster_Suslina},
while our test is minimax rate optimal up to logarithmic factors, as proved in the companion paper~\citep{Collier2012}. Finally,
\citep{Hardle90} does not provide any result on the power of the proposed test statistics.

So far, we have only focused on the behavior of the test under the null without paying attention on what happens
under the alternative. The next theorem fills this gap by establishing the consistency of the test defined by the critical
region (\ref{eq:13}).

\begin{theorem}\label{th:2}
Let condition \textbf{\bf (C)} be satisfied and let $\sigma_*^4N_{\sigma_*}$ tend to $0$ as $\sigma_*\to 0$.
Then the test statistic  $T_{\sigma_*}=\frac{\Delta_{\bnu,\sigma_*}(\bY^{\bullet,\diese})-\|\bnu\|_1}{\|\bnu\|_2}$ diverges under  $\mathbf{H_1}$, \textit{i.e.},
$T_{\sigma_*} \convproba +\infty$ as $\sigma_*\to 0$.
\end{theorem}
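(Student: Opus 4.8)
The plan is to produce a high-probability lower bound on $\Delta_\sigma(\bY^{\bullet,\#})$ that grows strictly faster than the scaling $4\|\bnu\|_2$ appearing in $T_\sigma$. Since we are in the regime $\sigma=\sigma^\#$, the statistic reads $\Delta_\sigma=\sigma^{-2}\min_{\tau}\|\bY-\ee(\tau)\circ\bY^\#\|_{2,\bnu}^2$. Writing $\bY-\ee(\tau)\circ\bY^\#=\boldsymbol{d}(\tau)+\sigma\,\boldsymbol{\xi}(\tau)$, where $\boldsymbol{d}(\tau)=\bc-\ee(\tau)\circ\bcdiese$ is deterministic and $\boldsymbol{\xi}(\tau)$ has components $\epsilon_j-e^{-\ii j\tau}\epsdiese_j$, I would expand the weighted squared norm into a signal term $A(\tau)=\|\boldsymbol{d}(\tau)\|_{2,\bnu}^2$, a cross term $C(\tau)=\Re\sum_{j\ge1}\nu_j\,\overline{d_j(\tau)}\,\xi_j(\tau)$, and a noise term $B(\tau)=\|\boldsymbol{\xi}(\tau)\|_{2,\bnu}^2$. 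Because $\Delta_\sigma$ is a minimum over the continuous shift, the task reduces to bounding $\min_\tau[A(\tau)+2\sigma C(\tau)+\sigma^2B(\tau)]$ from below \emph{uniformly} in $\tau$.

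The first, deterministic, step is a signal lower bound. Using $|d_j(\tau)|^2\le 2(|c_j|^2+|\cdiese_j|^2)$ and the square-integrability of $f,f^\#$, the tail $\sum_{j>M}|d_j(\tau)|^2$ falls below $\rho/2$ uniformly in $\tau$ once $M$ is large. Condition \textbf{(C)} guarantees $\nu_j\ge\overline c$ for every $j$ up to some index $M_\sigma\to+\infty$, so for $\sigma$ small enough
\[
\inf_\tau A(\tau)\ \ge\ \overline c\,\inf_\tau\sum_{j\le M_\sigma}|d_j(\tau)|^2\ \ge\ \overline c\Big(\inf_\tau\sum_{j\ge1}|d_j(\tau)|^2-\tfrac{\rho}{2}\Big)\ \ge\ \tfrac{\overline c\,\rho}{2}=:\rho'>0,
\]
the middle inequality using the tail estimate and the last one invoking $H_1$. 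This yields the leading contribution $A(\hat\tau)/\sigma^2\ge\rho'/\sigma^2$ to $\Delta_\sigma$.

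The second, stochastic, step is to show that after recentering by $4\|\bnu\|_1=\esp B(\tau)$ and rescaling by $4\|\bnu\|_2$ the remaining terms are negligible. I would first split $B(\tau)-4\|\bnu\|_1$ into a $\tau$-independent quadratic fluctuation $\sum_j\nu_j(|\epsilon_j|^2+|\epsdiese_j|^2)-4\|\bnu\|_1$, of order $O_P(\|\bnu\|_2)$, plus the bilinear process $-2\Re\sum_j\nu_j e^{-\ii j\tau}\overline{\epsilon_j}\epsdiese_j$. Both this process and $C(\tau)$ are, as functions of $\tau$, random trigonometric polynomials of degree $N_\sigma$; their suprema over $[0,2\pi]$ are controlled, up to a logarithmic factor, by their pointwise standard deviations, which are $O(1)$ for $C$ and $O(\|\bnu\|_2)$ for the bilinear part. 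This gives $\sup_\tau|C(\tau)|=O_P(\sqrt{\log N_\sigma})$ and $\sup_\tau|B(\tau)-4\|\bnu\|_1|=O_P(\|\bnu\|_2\sqrt{\log N_\sigma})$.

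Combining the two steps, and using the standing assumptions \textbf{(A)} and $N_\sigma\to+\infty$ (whence $\|\bnu\|_2\le\sqrt{N_\sigma}$), one gets with probability tending to one
\[
T_\sigma\ \ge\ \frac{\rho'}{4\sigma^2\|\bnu\|_2}-\frac{\sup_\tau|C(\tau)|}{2\sigma\|\bnu\|_2}-\frac{\sup_\tau|B(\tau)-4\|\bnu\|_1|}{4\|\bnu\|_2}.
\]
The first term is of order $(\sigma^4N_\sigma)^{-1/2}\to+\infty$, while dividing the two correction terms by it produces factors of order $\sigma\sqrt{\log N_\sigma}$ and $\sqrt{\sigma^4N_\sigma\log N_\sigma}$ that vanish under $\sigma^4N_\sigma\to0$; hence $T_\sigma\convproba+\infty$. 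The hard part will be precisely this uniform-in-$\tau$ control: pointwise estimates do not suffice for a minimum over a continuum, so one must bound the suprema of the \emph{non-Gaussian} (quadratic and bilinear in the noise) processes $B(\tau)$ and $C(\tau)$, which I would handle either by chaining or by exploiting their structure as random trigonometric polynomials of degree $N_\sigma$, this being the step where the interplay with the rate condition $\sigma^4N_\sigma\to0$ is delicate.
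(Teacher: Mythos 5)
Your proposal follows essentially the same route as the paper's proof: expand $\|\bY-\ee(\tau)\circ\bY^\#\|_{2,\bnu}^2$ into a signal term, a cross term and a pure-noise term, lower-bound the signal term by $\overline c(\rho-o(1))$ using condition \textbf{(C)} together with a uniform tail estimate, and control the stochastic terms uniformly in $\tau$ via supremum bounds for random trigonometric polynomials (the paper's Lemma~\ref{lem:2.5} for the Gaussian cross term and Lemma~\ref{lem:5}, proved by conditioning, for the bilinear noise term). You are in fact more careful than the paper on one point: the paper simply discards the non-negative noise term $P_\sigma(\tau)\ge 0$, which leaves the recentering $-4\|\bnu\|_1$ --- of exact order $\sqrt{N_\sigma}$ after division by $4\|\bnu\|_2$, by conditions \textbf{(A)} and \textbf{(B)} --- unaccounted for in its final display, whereas you correctly keep $B(\tau)$ and bound $\sup_\tau|B(\tau)-4\|\bnu\|_1|=O_P(\|\bnu\|_2\sqrt{\log N_\sigma})$. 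The one overstatement is at the very end: that bound contributes an additive $O_P(\sqrt{\log N_\sigma})$ to $T_\sigma$, so your argument actually requires $\sigma^4N_\sigma\log N_\sigma\to 0$ rather than just $\sigma^4N_\sigma\to 0$; this is marginally stronger than the stated hypothesis (harmless for the weight sequences considered in the paper, and a point the paper's own proof also glosses over), and the $\sqrt{\log N_\sigma}$ cannot be removed from the supremum of the bilinear process, so if one insists on the theorem exactly as stated this residual gap would need a different treatment of $\min_\tau B(\tau)$.
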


In other words, Theorem~\ref{th:2} establishes the convergence to one of the power of the test defined via (\ref{eq:13})
as the noise level $\sigma_*$ tends to $0$.

The previous theorem tells us nothing about the (minimax) rate of separation of the null hypothesis from the alternative.
In other words, Theorem~\ref{th:2} does not provide the rate of divergence of $T_{\sigma_*}$. Under more stringent assumptions on
the weight sequence $\nnu$ and when $\sigma=\sigma^\diese$, the minimax approach is developed in the companion paper~\cite{Collier2012}.
Here, we focus our attention on studying other aspects of the previously introduced methodology, more related to their applicability
in the area of computer vision.

\begin{remark}
It might be compelling to start the shifted-curve test by performing a test of equality of norms. Indeed, if there is enough evidence for
rejecting the hypothesis $\|\bc\|_2 = \|\bc^\diese\|_2$, then the hypothesis $c_j = e^{-\ii j\bar\tau^*}c_j^\diese$, $j\ge 1$, will   
be rejected as well. To perform the test of the equality of norms, one can use the procedure proposed in \cite{Comminges2013}, which is
proved to achieve optimal rates of separation.
\end{remark}

\section{Some extensions}\label{sec:ext}

This section presents two possible extensions of the methodology developed in foregoing sections. They stem from practical considerations and
concern the case of multidimensional curves and the setting of unknown noise magnitude.

\subsection{Multidimensional signals}

Theorems~\ref{th:1} and \ref{th:2} can be straightforwardly extended to the case of multidimensional curves $f$ and $f^\diese$
from $\RR\to\RR^d$, with an arbitrary integer $d\ge 1$. More precisely, we may assume that the observations
$\{\bX_i,\ldots,\bX_n;\bX^\diese_1,\ldots,\bX^\diese_{n^\diese}\}$
are $\RR^d$ values random processes given by
$$
\bX_i(t)=\int_0^t \boldf(u)\,du+\diag(\bS)\bB_i(t),\quad \bX_\ell^\diese(t)=\int_0^t \boldf^\diese(u)\,du+\diag(\bS^\diese) \bB_\ell^\diese(t),
$$
where $\diag(\bS)$ and $\diag(\bS^\diese)$ are diagonal $d\times d$ matrices with positive entries and  the random processes
$\{\bB_1,\ldots,\bB_n,\bB_1^\diese,\ldots,\bB^\diese_{n^\diese}\}$ are independent $d$-dimensional Brownian motions. In order to test the null hypothesis
\begin{equation}\label{eq:3b}
\mathbf{H_0}:\exists\; (\tau^*,{\mathbf a^*})\in [0,1]\times\RR^d\qquad \text{ such that\ }\qquad
\boldf(\cdot)=\boldf^\diese(\cdot+\tau^*)+{\mathbf a^*},
\end{equation}
it suffices to compute the Fourier coefficients
\begin{align*}
\big[\bY_j,\bY_j^\diese\big]=\int_0^1 e^{2\pi \ii jt}\,d\bigg[\frac1{n}\sum_{i=1}^n \bX_i(t), \frac1{n^\diese}\sum_{\ell=1}^{n^\diese} \bX_\ell^\diese(t)\bigg]
\qquad j= 1,2,\ldots
\end{align*}
and to evaluate the test statistic
$$
\Delta(\bY^{\bullet,\diese})=\frac{1}{2}\min_{\tau\in[0,2\pi]}\sum_{j\in\NN} \nu_j
\big\|(\diag(\ssigma)^2+\diag(\ssigma^\diese)^2)^{-1/2}\big(\bY_j-e^{-\ii j\tau}\bY_j^\diese\big)\big\|_2^2\,,
$$
where $\ssigma = \bS/\sqrt{2n}$ and $\ssigma^\diese = \bS^\diese/\sqrt{2n^\diese}$. One can easily check that if  $\mathbf H_0$ is true, then
under the assumptions of Theorem~\ref{th:1}, as $\sigma_* = \max(\|\ssigma\|_\infty,\|\ssigma^\diese\|_\infty)\to 0$, the random variable
$T_{\sigma_*}=(\Delta(\bY^{\bullet,\diese}) - d\|\nnu\|_1)/(\sqrt{d}\|\nnu\|_2)$ converges in distribution to a standard Gaussian random variable.
Furthermore, under $\mathbf H_1$, we have $T_{\sigma_*}\to\infty$ in probability provided that the assumptions of Theorem~\ref{th:2} are fulfilled.

\subsection{Unknown noise level}

In several application it is not realistic to assume that the magnitude of noise, denoted by $(\sigma,\sigma^\diese)$ is known in advance.
In such a situation, the testing procedure defined by the critical region (\ref{eq:13}) cannot be applied, since the test statistic
$\Delta_{\bnu,\sigma_*}(\bY^{\bullet,\diese})$ depends on $\sigma^2+\sigma^\diese{}^2$. We describe below one possible approach to address this issue.
Note that in order to make this setting meaningful, we assume that the noisy Fourier coefficients $Y_j$ and $Y^\diese_j$ are observable
only for a finite number of indices $j\in\{1,\ldots,p\}$. Therefore, we assume in this section that $\bY$ and $\bY^\diese$ are complex valued vectors of dimension $p$.

Adopting the same strategy as before, we aim at defining a testing procedure based on the principle of penalized likelihood ratio evaluation. However, when
the pair $(\sigma,\sigma^\diese)$ is unknown, the expression (\ref{eq:8}) for the negative log-likelihood is not valid anymore.  Instead, up to some irrelevant
summands, we have
\begin{equation}\label{eq:8'}
\ell(\bY^{\bullet,\diese},\bc^{\bullet,\diese},\sigma^{\bullet,\diese})= p(\log\sigma + \log\sigma^\diese)+ \frac{\|\bY-\bc\|_2^2}{2\sigma^2}+\frac{\|\bY^\diese-\bc^\diese\|_2^2}{2\sigma^\diese{}^2}.
\end{equation}
Therefore, given a vector of weights $\oomega\in \RR_+^p$, the penalized log-likelihood is defined as
\begin{align}
p\ell(\bY^{\bullet,\diese},\bc^{\bullet,\diese},\sigma^{\bullet,\diese})=&
\ell(\bY^{\bullet,\diese},\bc^{\bullet,\diese},\sigma^{\bullet,\diese})+
\frac{\|\bc\|_{\oomega,2}^2}{2\sigma^2}+
\frac{\|\bc^\diese\|_{\oomega,2}^2}{2\sigma^\diese{}^2}\:.\label{eq:9'}
\end{align}
Thus, the test statistic to be used is the difference between the minimum of the penalized log-likelihood constrained to $\mathbf{H_0}$
and the unconstrained minimum of the penalized log-likelihood, that is
\begin{align}
\Delta(\bY^{\bullet,\diese})=\min_{\sigma^{\bullet,\diese}}\min_{\bc^{\bullet,\diese}:\mathbf{H_0}
\text{ is true}}p\ell(\bY^{\bullet,\diese},\bc^{\bullet,\diese},\sigma^{\bullet,\diese})-\min_{\sigma^{\bullet,\diese}}
\min_{\bc^{\bullet,\diese}}p\ell(\bY^{\bullet,\diese},\bc^{\bullet,\diese},\sigma^{\bullet,\diese}).\label{eq:10'}
\end{align}
Denoting by $\nnu$ the vector $ (1/(1+\omega_1),\ldots,1/(1+\omega_p))^\top$ and by $1-\nnu$ the vector $(1-\nu_1,\ldots,1-\nu_p)^\top$,
and restricting the minimization to $\sigma=\sigma^\diese$, we get
$$
\Delta(\bY^{\bullet,\diese})=
p\log\bigg(1+\frac{\min_{\tau\in[0,2\pi[} \|\bY-\ee(\tau)\circ\bY^\diese\|_{2,\nnu}^2}{2(\|\bY\|_{2,1-\nnu}^2+\|\bY^\diese\|_{2,1-\nnu}^2)}\bigg).
$$
Let $\alpha\in(0,1)$ be a prescribed significance level and let us introduce the statistic
\begin{align}\label{eq:11'}
\tilde \Delta(\bY^{\bullet,\diese})  =
\frac{\|1-\nnu\|_1}{\|\bY\|_{2,1-\nnu}^2+\|\bY^\diese\|_{2,1-\nnu}^2}
{\min_{\tau\in[0,2\pi[} \|\bY-\ee(\tau)\circ\bY^\diese\|_{2,\nnu}^2}.
\end{align}
Intuitively, this new test statistic $\tilde \Delta(\bY^{\bullet,\diese})$  can be seen as an estimator
of $\frac1{2(\sigma^2+\sigma^\diese{}^2)} {\min_{\tau} \|\bY-\ee(\tau)\circ\bY^\diese\|_{2,\nnu}^2}$
used in the setting of known noise level. Therefore, it is not so much a surprise that the critical region we deduce from (\ref{eq:11'})
is of the form
$$
\tilde \Delta(\bY^{\bullet,\diese}) \ge C_{\nnu,\alpha},
$$
where $C_{\nnu,\alpha}$ is a given threshold. To propose a choice of this threshold that leads to a test of  asymptotic level $\alpha$,
the asymptotic distribution of $\tilde \Delta(\bY^{\bullet,\diese}) $ should be characterized under the null hypothesis. Let us introduce the statistic
\begin{align}\label{test:t}
T(\bY^{\bullet,\diese})=\frac{\tilde \Delta(\bY^{\bullet,\diese}) - \|\nnu\|_1}{\|\nnu\|_2}.
\end{align}
In order to simplify the presentation, we develop the subsequent arguments only in the case $\sigma = \sigmadiese=\sigma_*$.
We further assume that the noisy Fourier coefficients $\bY$ and $\bYdiese$ are generated by a process described in the Introduction,
cf.~(\ref{eq:5}), which means that $\sigma_*$ is equal to $s/\sqrt{2n}$ with a known parameter $n$ and an unknown factor $s$. The asymptotic
setting $\sigma_*\to 0$ corresponds then to the standard ``large sample asymptotics'' $n\to\infty$. The main advantage of this
setting is that it allows us to use the knowledge of $n$ in the choice of the sequence $\bnu$.

\begin{theorem}\label{th:3}
Let  $\bc\in\calF_{1,L}$ and let the sequence $\bnu$ satisfy conditions {\bf(A)} and {\bf(B)} with
an integer $N_{\sigma_*}$ that depends only on $n$ so that $N_{\sigma_*}\to+\infty$ and $\sigma_*N_{\sigma_*}=O(1)$
when $\sigma_*$ tends to zero.
Assume, in addition, that $p\ge 2N_{\sigma_*}$ is large enough to satisfy  $(p-N_{\sigma_*})\sigma_*^2
N_{\sigma_*}^{3/2}\to\infty$  and that
for some constant $c'>0$,  $\max_{j\ge 1} j^{-2}(1-\nu_j) \le c' N_{\sigma_*}^{-2}$.
Then, under the null hypothesis, if $\sigma=\sigma^\diese$ tends to zero, the test statistic $T(\bY^{\bullet,\diese})$
satisfies
\begin{align*}
T(\bY^{\bullet,\diese})&\le \sum_{j=1}^{p} \frac{\nu_j(|\xi_j|^2-2)}{2\|\nnu\|_2}+
\frac{\|\nnu\|_1}{4\|\nnu\|_2}\sum_{j=1}^p\frac{1-\nu_j}{\|1-\nnu\|_1}\;\big(4-|\xi_j|^2-|\xi_j^\diese|^2\big)
+\frac{O_P(1)}{(p-N_{\sigma_*})\sigma_*^2N_{\sigma_*}^{3/2}\wedge (p-N_{\sigma_*})^{1/2}},
\end{align*}
where $\{\xi_j;\xi_j^\diese\}$ are i.i.d.\  $\mathcal N_\CC(0,1)$ random variables.
\end{theorem}

The proof of this theorem is postponed to the Appendix. Instead, we discuss here some relevant consequences of it.
First of all, note that a simple application of Lyapunov's central limit theorem implies that under the conditions
$N_{\sigma_*}\to\infty$ and $(p-N_{\sigma_*})^{2}/p \to\infty$ both
sums  $\sum_{j=1}^{p} \frac{\nu_j}{2\|\nnu\|_2}(|\xi_j|^2-2)$ and $\sum_{j=1}^p\frac{1-\nu_j}{2\sqrt{2}\|1-\nnu\|_2}\;\big(4-|\xi_j|^2-|\xi_j^\diese|^2\big)$
converge in distribution to the standard Gaussian distribution. Therefore, the test defined by the critical region
\begin{align}\label{test:3}
T(\bY^{\bullet,\diese}) \ge \min_{0<\beta<1}\bigg(
z_{1-\beta}+\frac{\|\nnu\|_1\|1-\nnu\|_2}{\sqrt{2}\|\nnu\|_2\|1-\nnu\|_1}z_{1-\alpha+\beta}\bigg)
\end{align}
is asymptotically of level not larger than $\alpha$.
Note also that a more precise critical region can be deduced from Theorem~\ref{th:3} without relying on the central limit theorem. Indeed,
the main terms $\sum_{j=1}^{p} \frac{\nu_j}{2\|\nnu\|_2}(|\xi_j|^2-2)$ and $\sum_{j=1}^p\frac{1-\nu_j}{2\sqrt{2}\|1-\nnu\|_2}\;\big(4-|\xi_j|^2-|\xi_j^\diese|^2\big)$ have parameter free distributions, the quantiles
of which can be determined numerically by means of Monte Carlo simulations. In the case of projection weights, one can also use the quantiles of
chi squared distributions.

To conclude this section, let us have a closer look at the assumptions of the last theorem. The conditions {\bf(A)}, {\bf(B)}
and $\max_{j\not = 0} j^{-2}(1-\nu_j) \le c' N_{\sigma_*}^{-2}$ are satisfied for most weights used in practice. Thus, the most important conditions are
$(p-N_{\sigma_*})\sigma_*^2N_{\sigma_*}^{3/2}\to\infty$ and $\sigma_*N_{\sigma_*}\to 0$. The first of these two conditions ensures
that the error term coming from the estimation of the unknown noise level is small. The second one is a weak version of the condition
$\sigma_*N_{\sigma_*}^{5/2}\log N_{\sigma_*} = o(1)$ present in Theorem~\ref{th:1}, which ensures that we do not use a strongly
undersmoothed test statistic. We manage here to obtain a condition on $N_{\sigma_*}$ which is weaker than the corresponding condition
in Theorem~\ref{th:1} because we do not establish the asymptotic distribution of the test statistic but just an upper bound of the latter.
The choice of $p$ and $N_{\sigma_*}$ is particularly important for obtaining a test with a power close to one, especially in the case of
alternatives that are close to the null. However, the investigation of this point is out of scope of the present work. Let us just mention
that if we choose $N_{\sigma_*} = \sigma_*^{-\beta}$ and $p = 2\sigma_*^{-\gamma}$, the conditions of Theorem~\ref{th:3} are satisfied
if $\beta\le\min(\gamma,1)$ and $2\gamma+3\beta\ge 4$. Of course, these conditions are closely related to the assumption that the
unknown signal belongs to the smoothness class of regularity $1$.

\section{Numerical experiments}\label{sec:3}

We have implemented the proposed testing procedures (\ref{eq:13}) and (\ref{test:3}) in Matlab and carried out a certain number of
numerical experiments on synthetic data. The aim of these experiments is merely to show that the methodology developed in the present
paper is applicable and to give an illustration of how the different characteristics of the testing procedure, such as the significance
level and the power, depend on the noise variance $\sigma_*^2$ and on the shrinkage weights $\bnu$. We also aimed at comparing the
performance of the testing procedure (\ref{test:3}) with that of (\ref{eq:13}). In order to ensure the reproducibility, the Matlab code 
of the experiments reported in this section is made available on
\url{https://code.google.com/p/shifted-curve-testing/}.

\subsection{Behavior of the Type I error rate}\label{sec4.1}

In order to illustrate the convergence of the test statistic of the procedure (\ref{eq:13}) when $\sigma_*$ tends to zero and to assess the
Type I error rate, we conducted the following experiment. We chose as function $f$ the smoothed version of the HeaviSine function,
considered as a benchmark in the signal processing community, and computed its complex Fourier coefficients
$\{c_j;j=0,\ldots,10^6\}$. More precisely, the $j$th Fourier coefficients $c_j$ of $f$ is obtained by dividing by $j$ the
corresponding Fourier coefficient of the HeaviSine function.
\begin{center}
\begin{figure}[ht!]
\centerline{\includegraphics[width=0.85\textwidth]{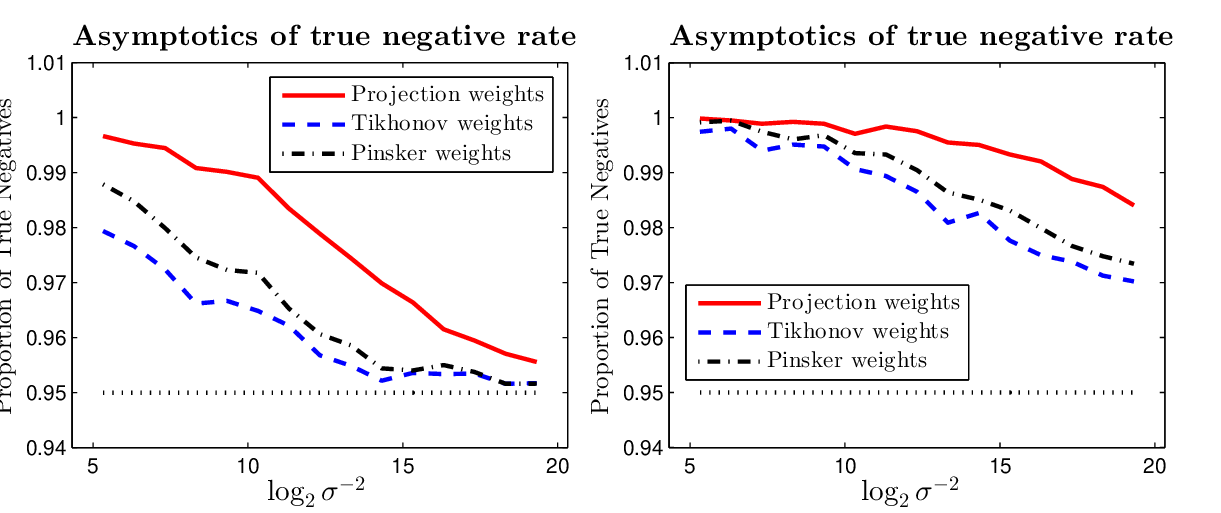}}
\caption{The proportion of true negatives in the experiment described in Section~\ref{sec4.1} as a function of $n =\log_2 \sigma^{-2}$
for three different shrinkage weights: projection (dashed line), Tikhonov (solid line) and Pinsker (dash-dotted line).
One can observe that for all the weights the proportion of true negatives converges to the nominal level $0.95$.
The left panel plots the results for the test procedure using the true noise level, while the right panel
plots the results for the procedure corresponding to unknown noise level.}
\label{fig:1}
\end{figure}
\end{center}

For each value of $n$ taken from the set $\{n_k=20\times 2^{k},\, k=1,\ldots,15\}$, we repeated $10^5$
times the following computations:
\vspace{-7pt}
\begin{itemize}\itemsep=0pt
\item[$\bullet$] set $\sigma_* = n^{-1/2}$ and\footnote{This value of $N_{\sigma_*}$ satisfies the
assumptions required by our theoretical results.} $N_{\sigma_*}=[50\sigma_*^{-1/2}]$,
\item[$\bullet$] generate the noisy sequence $\{Y_j; j=1,\ldots,N_{\sigma_*}\}$ by adding to $\{c_j\}$ an
i.i.d.\ $\mathcal N_{\mathbb C}(0,\sigma_*^2)$ sequence $\{\xi_j\}$,
\item[$\bullet$] randomly choose a parameter $\tau^*$ uniformly distributed in $[0,2\pi]$, independent of $\{\xi_j\}$,
\item[$\bullet$] generate the shifted noisy sequence $\{Y_j^\diese; j=1,\ldots,N_{\sigma_*}\}$ by adding to $\{e^{\ii j\tau^*}c_j\}$  an i.i.d.\
$\mathcal N_{\mathbb C}(0,\sigma_*^2)$ sequence  $\{\xi_j^\diese\}$, independent of $\{\xi_j\}$ and of $\tau^*$,
\item[$\bullet$] compute the three values of the test statistic $\Delta_{\bnu,\sigma_*}$ corresponding to the classical shrinkage weights
defined by (\ref{weights}) and compare these values with the threshold for $\alpha=5\%$.
\end{itemize}
We denote by $p_{\text{accept}}^{\text{proj}}(\sigma_*)$, $p_{\text{accept}}^{\text{Tikh}}(\sigma_*)$ and $p_{\text{accept}}^{\text{Pinsk}}(\sigma_*)$
the proportion of experiments (among $10^5$ that were realized) for which the value of the corresponding test statistic was lower than
the threshold, \textit{i.e.}, the proportion of experiments leading to the non-rejection of the null hypothesis.
We plotted in the left panel of Fig.~\ref{fig:1} the (linearly interpolated) curves $k\mapsto p_{\text{accept}}^{\text{proj}}(n_k)$,
$k\mapsto p_{\text{accept}}^{\text{Tikh}}(n_k)$ and
$k\mapsto p_{\text{accept}}^{\text{Pinsk}}(n_k)$. For the Pinsker weights, we (somewhat arbitrarily) chose
$\mu=2$, while the parameters of the Tikhonov weights were chosen as follows: $\kappa=1/2$ and $\mu=2$.
The graphs plotted in the left panel of Figure~\ref{fig:1} show that the proportion of true negatives is very close to the nominal
level $0.95$, irrespectively from the choice of the weight sequence. Furthermore, when $\sigma_*$ goes to zero (that is when $n$
becomes large) the empirical Type I error rate converges to the nominal level. This is in line with the claim of Theorem~\ref{th:1}.

We also carried out the same experiment for the test statistic (\ref{eq:11'}) that does not require the knowledge of the noise levels
$\sigma$ and $\sigmadiese$. The data generation process was the same as before, except that $\sigma_*$ was defined as $s/\sqrt{n}$,
where $s$ was drawn at random uniformly in the interval $[1,4]$. The cut-off parameter $N_{\sigma_*} = N_n$ was then set to
$[50n^{1/4}]$, independently of the value of $s$. The number $p$ of Fourier coefficients was chosen proportional to $N^{3/2}$.
It is interesting to observe that in this situation as well the observed proportion of true
negatives is dominating  the nominal level. This test is more conservative than the one based on the full knowledge of the noise
level and this is not surprising since the threshold used in (\ref{test:3}) is not based on as asymptotic distribution of the test statistic but merely
on an asymptotic upper bound. It should also be noted the noise-level-adaptive procedure has a computational complexity which is slightly higher than the one of the test procedure for known $\sigma_*$. This increase of computational complexity is due to the fact that the estimation of $\sigma_*^2$ requires computing the Fourier
coefficients of the observed signals corresponding to high frequencies.

\subsection{Power of the tests}\label{sec:A3}

In the previous experiment, we illustrated the behavior of the penalized likelihood ratio test under the null hypothesis.
The aim of the second experiment is to show what happens under the alternative. To this end,  we still use the smoothed HeaviSine
function as signal $f$ and define $f^\diese=f+\gamma \varphi$, where $\gamma$ is a real parameter. Two cases are considered:
$\varphi(t)=c\cos(4t)$ and $\varphi(t)=c/(1+t^2)$, where $c$ is a constant ensuring that $\varphi$ has an $L^2$ norm equal to that
of $f$. For the sake of the conciseness, only the results obtained for the projection weights are reported.

In the experiment described in this section, we chose $n$ from $\{ n(k) = 2^k:k=1,\ldots,4\}$ and $\gamma$ from
$\{\gamma(\ell) = 0.1\ell:\ell= 1,\ldots,15\}$.  For each value of the pair $(n(k),\gamma(\ell))$, we repeated $5000$ times
the following computations:
\vspace{-7pt}
\begin{itemize}\itemsep=0pt
\item[$\bullet$] set $\sigma_*=n(k)^{-1/2}$ and $N_{\sigma_*}=[50\sigma_*^{-1/2}]$,
\item[$\bullet$] compute the complex Fourier coefficients
$\{c_j;j=1,\ldots,10^6\}$ and $\{c_j^\diese;j=1,\ldots,10^6\}$ of $f$ and $f^\diese$, respectively,
\item[$\bullet$] generate the noisy sequence $\{Y_j; j=1,\ldots,N_{\sigma_*}\}$ by adding to $\{c_j\}$ an i.i.d.\ $\mathcal N_{\mathbb C}(0,\sigma_*^2)$
sequence $\{\xi_j\}$,
\item[$\bullet$] generate the shifted noisy sequence $\{Y_j^\diese; j=1,\ldots,N_{\sigma_*}\}$ by adding to $\{c_j^\diese\}$  an i.i.d.\
$\mathcal N_{\mathbb C}(0,\sigma_*^2)$ sequence  $\{\xi_j^\diese\}$, independent of $\{\xi_j\}$,
\item[$\bullet$] compute the value of the test statistic $\Delta_{\bnu,\sigma_*}$ corresponding to the projection weights and compare this value
with the threshold for $\alpha=5\%$.
\end{itemize}
To demonstrate the behavior of the test under $\mathbf{H_1}$ when the distance between the null and the alternative varies,
we computed the proportion of true positives, also called the empirical power, among the $5000$ simulated random samples.
The results, plotted in Fig.\ ~\ref{fig:2} show that even for moderately small values of $\gamma$, the test succeeds in
taking the correct decision. We also clearly observe the convergence of the test since the curves corresponding to small
values of $\sigma_*$ (or, equivalently, large values of $n$) are at the left of the curves corresponding to larger values
of $\sigma_*$. Note also that the results for $\varphi(t) = c \cos(4t)$ and $\varphi(t) = c/(1+t^2)$ are quite comparable.

\begin{figure}[ht!]
\centerline{\includegraphics[width=0.85\textwidth]{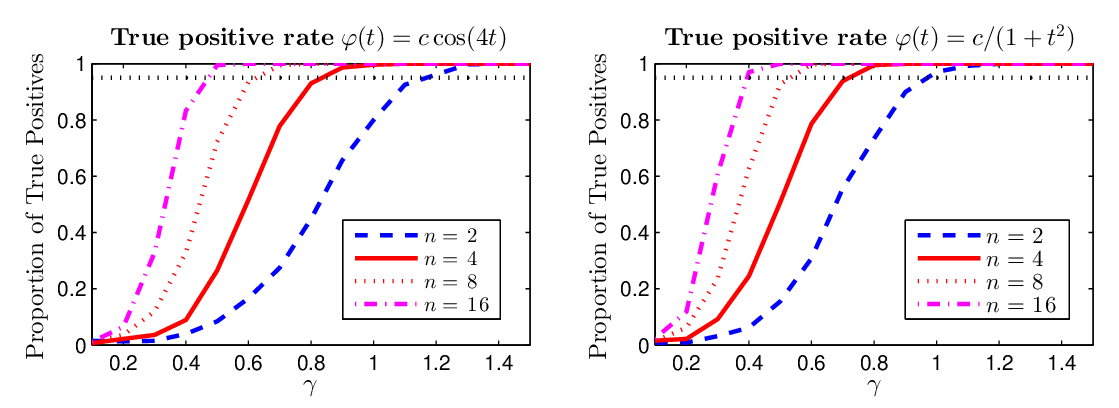}}
\caption{The proportion of true positives in the experiment described in Section~\ref{sec:A3} as a function of the parameter
$\gamma$ measuring the ``distance'' from the null. The parameter $n$ is defined as $\sigma_*^{-2}$, where $\sigma_*$ is the noise
level. We can observe that even for relatively small values of $\gamma$ the test has a power close to one.}
\label{fig:2}
\end{figure}

To further investigate the power of the generalized likelihood ratio test described in previous sections, we
carried out the last experiment with a nonsmooth function $\varphi$. More precisely, we chose as $\varphi$ the
unsmoothed version of the function $\psi(t) = 1/(1+t^2)$, in the sense that the Fourier coefficient $c_j$ of $\varphi$
is equal to the corresponding Fourier coefficient of $\psi(\cdot)$ multiplied by $j$, for $j \ge 1$. The function $\varphi$
obtained in this way is then normalized to have a $L^2$-norm equal to that of $f$. Note that the nonsmoothness of $\varphi$
implies that of $f^{\diese}$. The results are plotted in Fig.~\ref{fig:4:4}. When compared with Fig.~\ref{fig:2}, this
plot clearly shows that the power of the test gets deteriorated in the nonsmooth case. Indeed, in order to achieve a behavior
of the power similar to that of Fig.~\ref{fig:2}, we need to multiply $\gamma$ by a factor close to 10.

\begin{figure}[ht!]
\centerline{\includegraphics[width=0.45\textwidth]{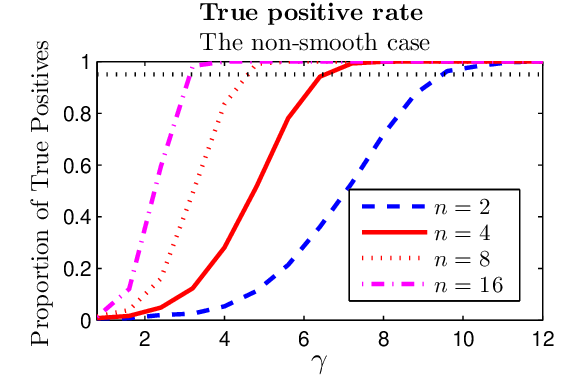}}
\caption{The proportion of true positives in the experiment described in Section~\ref{sec:A3} as a function of the parameter
$\gamma$ measuring the ``distance'' from the null. The ``perturbation'' function $\varphi$ is the unsmoothed version of the function
$c/(1+t^2)$. Comparing this plot to Fig.~\ref{fig:2}, we see that, for a given $n$, the values of $\gamma$ leading to a power
close to one are much larger. }
\label{fig:4:4}
\end{figure}

Finally, we performed the same experiment for the noise-level-adaptive procedure defined by
(\ref{eq:11'}), (\ref{test:t}) and (\ref{test:3}). The differences compared to the protocol described
in the beginning of this subsection were that the values of $n$ were chosen among $\{n(k) = 10\times 2^k: k = 1,\ldots,4\}$
and $\sigma_*$ and $N_{\sigma_*}$ were set to $sn^{-1/2}$ and $[50n^{1/4}]$, respectively.
As in the nonsmooth case, here also we observe that the rate of convergence gets deteriorated. The noise-level-adaptive
procedure has a power equivalent to that of the original procedure for noise levels that are divided by $\sqrt{10}$.
This is in part explained by the fact that the adaptive test is conservative (cf.~the right panel of Fig.~\ref{fig:1}) but
also by the fact that the substitution of the noise level by an estimator results in an increased stochastic error.

\begin{figure}[ht!]
\centerline{\includegraphics[width=0.85\textwidth]{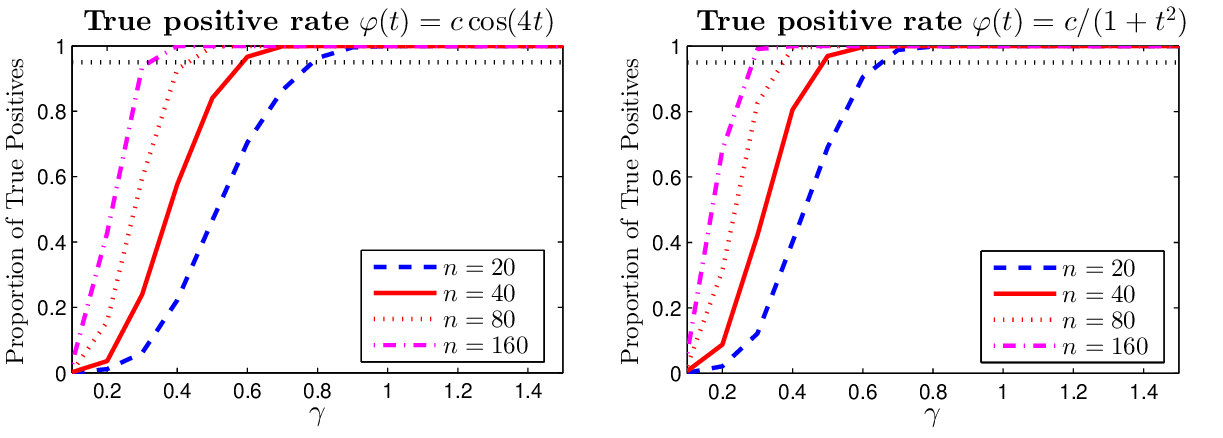}}
\caption{The proportion of true positives in the experiment described in Section~\ref{sec:A3} as a function of the parameter
$\gamma$ measuring the ``distance'' from the null. As opposed to Fig.~\ref{fig:2}, here we applied the noise-level-adaptive test
defined by  (\ref{eq:11'}), (\ref{test:t}) and (\ref{test:3}). The parameter $n$ is defined as $\sigma_*^{-2}$, where $\sigma_*$ is the noise
level. We can observe that in order to get plots similar to those of Fig.~\ref{fig:2}, we used much smaller values of $\sigma_*$.
This means that the convergence of the noise-level adaptive-test is slower than that of the original generalized likelihood
ratio test.}
\label{fig:4:3}
\end{figure}

\section{Application to keypoint matching}\label{sec:LoFT}

As mentioned in the introduction, the methodology developed in previous sections may be applied to the problem of keypoint matching
in computer vision. More specifically, for a pair of digital images $I$ and $I^\diese$ representing the same 3D object, the task of keypoint matching
consists in finding pairs of image points $(\mathbf x_0, \mathbf x_0^\diese)$ in images $I$ and $I^\diese$ respectively, corresponding to the same 3D point.
For more details on this and related topics, we refer the interested reader to the book \cite{Hartley_Ziss}. For our purposes here, we assume that
we are given a pair of points  $(\mathbf x_0, \mathbf x_0^\diese)$ in images $I$ and $I^\diese$ respectively and the goal is to decide whether they are the projections
of the same 3D point or not. In fact, we will tackle a slightly simpler\footnote{Note here that using the state-of-the-art techniques
of keypoint detection based on the differences of Gaussians, see \cite{Lowe}, one can recover a rather reliable value of the scale parameter
for every keypoint. Using this scale parameter, the problem of testing for a similarity transform reduces to the problem of testing for a rotation
that we consider in this section.} problem corresponding to deciding whether a neighborhood of $\mathbf x_0$ in the image $I$ coincides with a
neighborhood of $\mathbf x_0^\diese$ in the image $I^\diese$ up to a rotation. Of course, this problem is made harder by the fact that the images
are contaminated by noise.

The plan of this section is as follows. As a first step, we present a new definition of a local descriptor (termed LoFT for Localized Fourier Transform)
of a keypoint $\mathbf x_0$ in some image $I$. This local descriptor is based on the Fourier coefficients of some mapping related to the local neighborhood
of the image $I$ around $\mathbf x_0$. Therefore, it is particularly well suited for testing for rotation between two keypoints. In a second step, we define
a matching criterion: a $\{0,1\}$-valued mapping that takes as input pairs $(\mathbf x_0, I)$ and $(\mathbf x_0^\diese, I^\diese)$  and outputs 1 if and only
if $\mathbf x_0$ and $\mathbf x_0^\diese$ are classified as matching points (\textit{i.e.}, corresponding to the same 3D point). Finally, as a third step, we
perform several experiments showing the potential of the proposed approach.

\begin{figure}[ht!]
\centerline{\includegraphics[height=3.6cm]{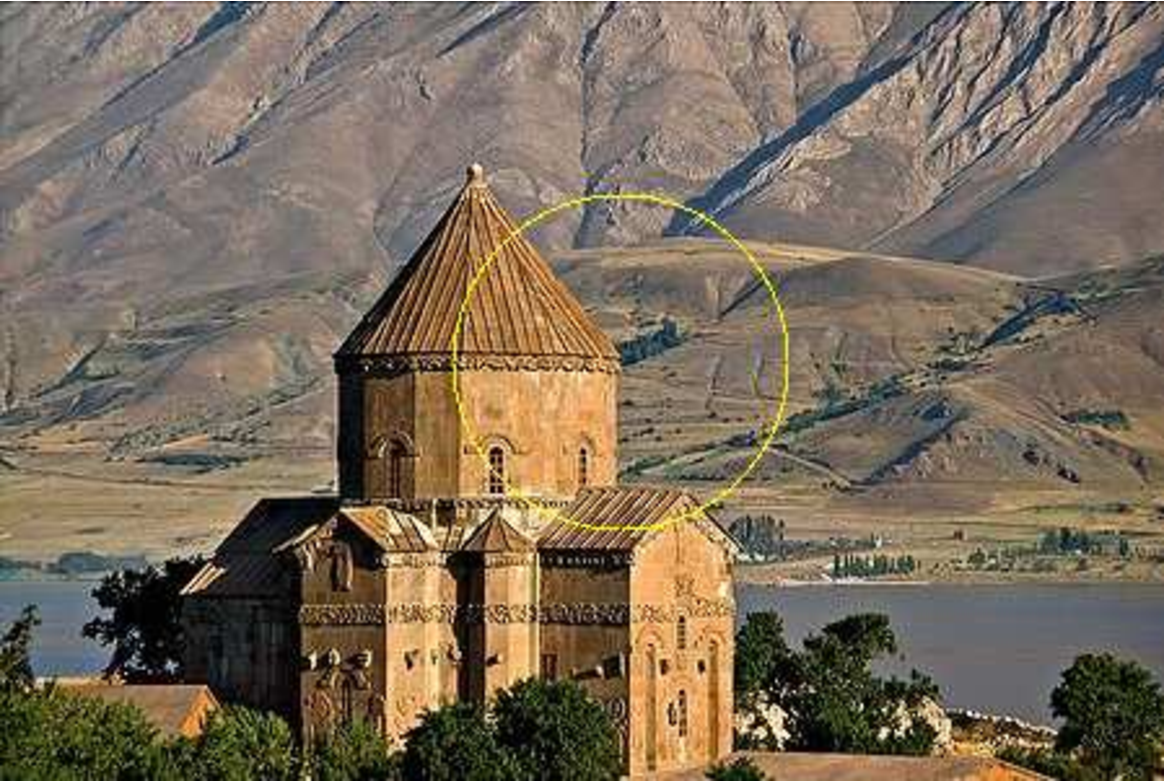}\hspace{15pt}
\includegraphics[height=3.6cm]{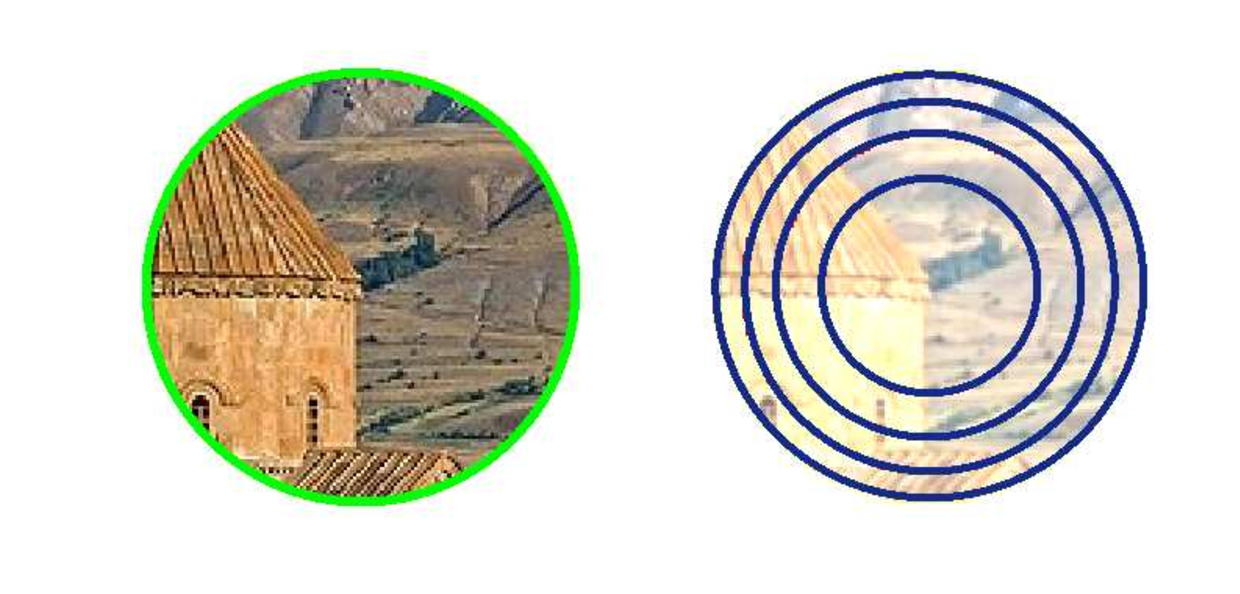}}
\hbox to \hsize{\bf\hspace{75pt}(a) \hspace{135pt} (b) \hspace{81pt}(c)\hfill}
\bigskip
\centerline{
\includegraphics[height=4.5cm]{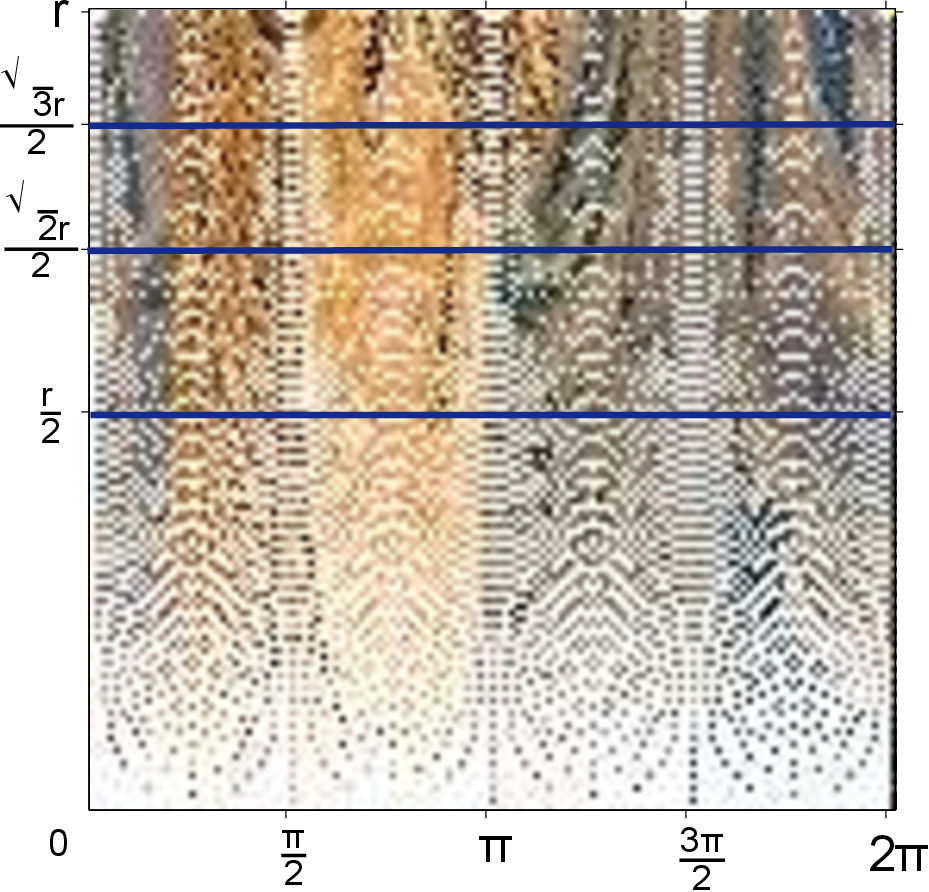}\hspace{15pt}
\includegraphics[height=4.5cm]{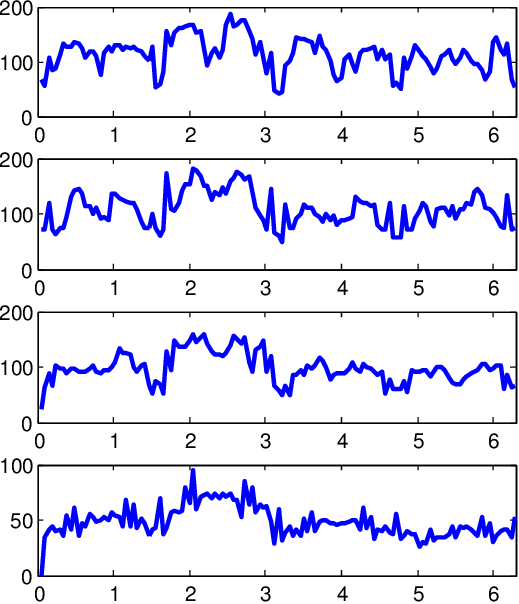}\hspace{15pt}
\includegraphics[height=4.5cm]{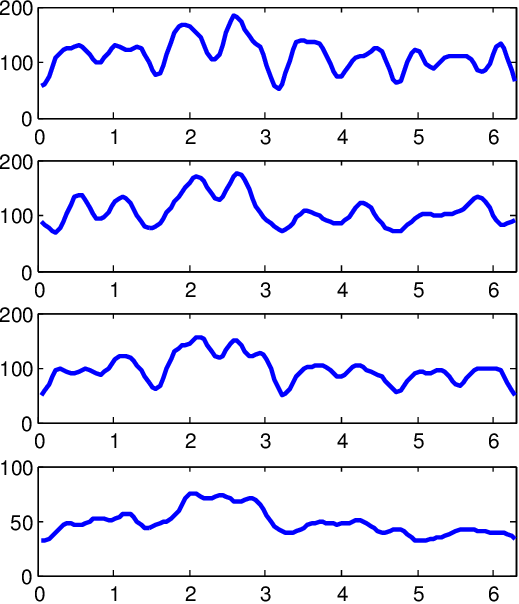}}
\hbox to \hsize{\bf\hspace{80pt}(d) \hspace{110pt} (e) \hspace{102pt}(f)\hfill}
\caption{Illustration of the construction of the LoFT descriptor of a keypoint $\mathbf x_0$ in an image $I$. Once a keypoint is chosen along with the radius
of the neighborhood to be considered (a,b), we split the corresponding subimage into 4 rings of equal areas represented in (c). In (d), we plot
the pixel intensities as a function of polar coordinates of the pixel. Each band corresponds to a ring in (c). The four curves in (e) are obtained by averaging
the pixel intensities corresponding to the same angle within each band of (d). Smoothed versions of these curves obtained by removing high frequencies are plotted in (f).}
\label{fig:3}
\end{figure}

\subsection{LoFT descriptor}

We start by describing the construction of the LoFT descriptor of a point $\mathbf x_0$ in a digital image $I$. For the purpose of illustration we
use color images, but all the experiments were conducted on grayscale images only. As any other construction of local descriptor, it is
necessary to choose a radius $r>0$ that specifies the neighborhood around $\mathbf x_0$. In all our experiments we used $r=32$ pixels, which seems to
lead to good results. Thus, we restrict the image $I$ to the disc $\mathcal D(\mathbf x_0,r)$ with center $\mathbf x_0$ and radius $r$, as shown in Fig.\ ~\ref{fig:4} (a) and (b). Note that this disc contains approximately $\pi r^2 > 3000$ pixels, but we will encode the restriction of $I$ to
$\mathcal D(\mathbf x_0,r)$ by a vector of size $128$.

The main idea consists in considering the function $\boldsymbol X:[0,2\pi]\to \RR^4$ defined by
\begin{equation}\label{eq:5.0}
\boldsymbol X(t) =
\begin{bmatrix}
X_1(t)\\
\vdots\\
X_4(t)
\end{bmatrix},\qquad X_\ell(t) = \int_{\sqrt{\ell-1}r/2}^{\sqrt{\ell}r/2} I\big(\mathbf x_0 + u[\sin(t), \cos(t)]\big)\,du,\quad \ell=1,\ldots,4.
\end{equation}
In other terms, each $X_\ell(t)$ describes the behavior of $I$ on some ring centered at $\mathbf x_0$, cf.\ Fig.\  \ref{fig:3}(c) for
an illustration. As shown in Fig.\  \ref{fig:3}(e), because of noise and textures present in the images, the functions $X_i$ are highly
nonsmooth. Since the details are not necessarily very informative when matching two image regions, we suggest to smooth out the functions
$X_\ell$ by removing high frequency Fourier coefficients, cf. Fig.\  \ref{fig:3}(f). The resulting descriptor is the vector composed of the
first $k$ Fourier coefficients
\begin{equation}\label{eq:5.2}
\bY\!_j = \frac1{\sqrt{2\pi}}\int_0^{2\pi} \bX(t) e^{\ii j t}\,dt,\qquad j=1,\ldots,k.
\end{equation}
To get a descriptor of size 128 (a complex number is encoded as two real numbers corresponding to its real and imaginary parts), we chose
$k=16$. The computation of each element of  the descriptor requires thus to evaluate an integral of the form
\begin{equation}\label{eq:5.3}
Y_j(\ell) = \frac1{\sqrt{2\pi}}\int_0^{2\pi} \int_{\sqrt{\ell-1}r/2}^{\sqrt{\ell}r/2} e^{\ii j t} I\big(\mathbf x_0 + u[\sin(t), \cos(t)]\big)\,du\,dt.
\end{equation}
Note that we dispose only a regularly sampled version of the image $I$ in the Cartesian coordinate system. This results in a nonregular sampling
in the polar coordinate system, illustrated in Fig.~\ref{fig:3}(d). The integrals are then approximated by the corresponding Riemann sums; the
rings being chosen so that they contain approximately the same number of sampled points, the qualities of these approximations are roughly equivalent.

\subsection{Matching criterion}

The rationale for using this function $\boldsymbol X$ is the following. Assume that $\mathbf x_0$ and $\mathbf x_0^\diese$ are
two true matches and that the images are observed without noise. That is to say that $\mathbf x_0$ and $\mathbf x_0^\diese$ are two points in
$I$ and $I^\diese$, respectively, such that if we rotate $I$ by some angle $\tau^*$ around $\mathbf x_0$ then in the neighborhood of
$\mathbf x_0$ of radius $r$ the image $I$ coincides with $I^\diese$ in the neighborhood of $\mathbf x'$. Or,  mathematically speaking,
$\forall (u,t)\in[0,r]\times [0,2\pi]$,
\begin{equation}\label{eq:5.1}
I\big(\mathbf x_0 + u[\sin(t-\tau^*),\cos(t-\tau^*)]\big) = I^\diese\big(\mathbf x_0^\diese+u[\sin t, \cos t]\big).
\end{equation}
Then, by simple integration and using (\ref{eq:5.0}) one checks that
\begin{equation}\label{eq:5.4}
\boldsymbol X(t-\tau^*) =\boldsymbol X^\diese(t),\qquad \forall t\in[0,2\pi],
\end{equation}
where $\boldsymbol  X^\diese$ is defined in the same manner as $\boldsymbol X$, that is by replacing in (\ref{eq:5.0})
$I$ by $I^\diese$ and $\mathbf x_0$ by $\mathbf x_0^\diese$. Furthermore, since these two functions are $2\pi$-periodic,  relation
(\ref{eq:5.4}) holds for the smoothed versions of $\boldsymbol X$ and  $\boldsymbol X^\diese$ as well.

This observation, depicted in Fig.~\ref{fig:4}, leads to the following criterion for keypoint matching based on their LoFT descriptors.
Given a threshold $\lambda>0$ and a (estimated) noise level $\sigma$, we declare that the LoFT descriptors $\bY$ and $\bY^\diese$ corresponding to
the keypoints $\mathbf x_0$ and $\mathbf x_0^\diese$ and defined by (\ref{eq:5.3}) match if and only if
\begin{equation}\label{eq:5.5}
\Delta := \frac1{4\sigma^2} \min_{\tau\in[0,2\pi]}\sum_{j=1}^k \|\bY_j-e^{-\ii j \tau}\bY_j^\diese\|_2^2 \le \lambda.
\end{equation}
According to the theoretical results established in foregoing sections, under the null hypothesis (that is when the pair $(\mathbf x_0,\mathbf x_0^\diese)$
is a true match) the test statistic $\Delta$ is asymptotically parameter free and the limiting distribution is Gaussian with zero mean and a variance that can
be easily computed. We carried out some experiments, reported in the next subsection, which show that this property holds not only for small $\sigma$,
but also for reasonably high values of it. Furthermore, substituting the true noise level by an estimated one yields sensibly similar results.
\begin{figure}[ht]
\hbox to \hsize{\hfill
\includegraphics[width=0.21\textwidth]{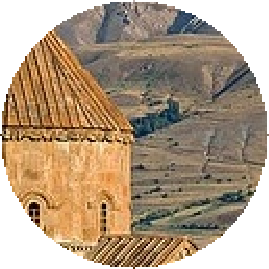}\hfill
\includegraphics[width=0.26\textwidth]{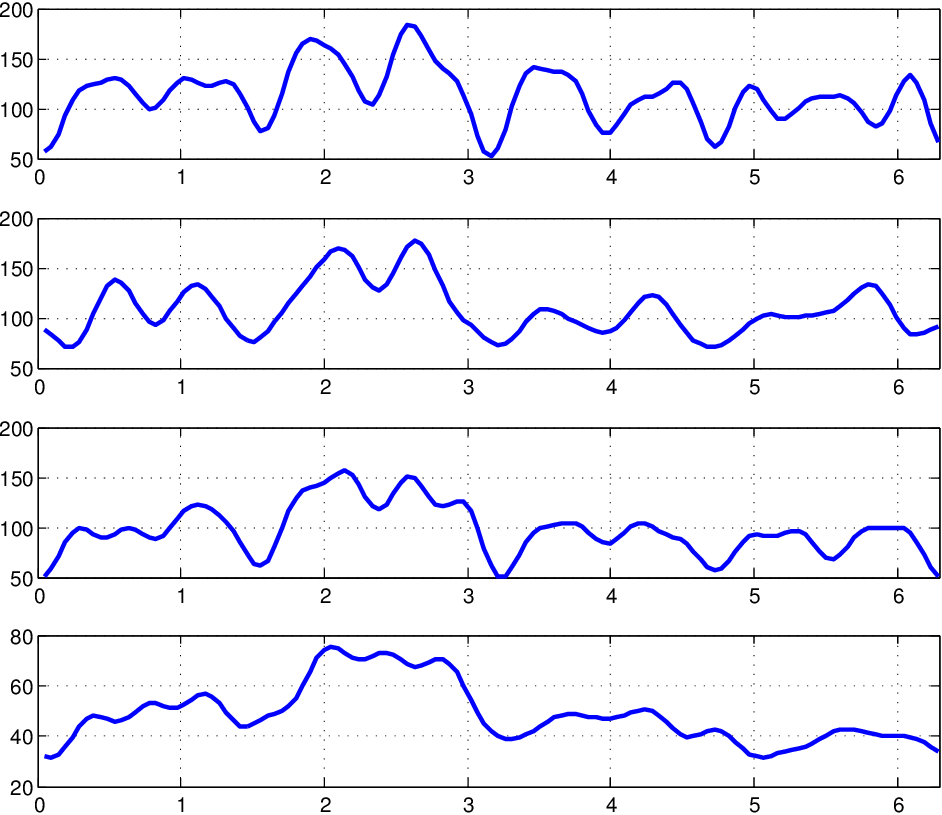}\hfill
\includegraphics[width=0.21\textwidth]{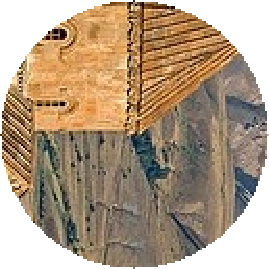}\hfill
\includegraphics[width=0.26\textwidth]{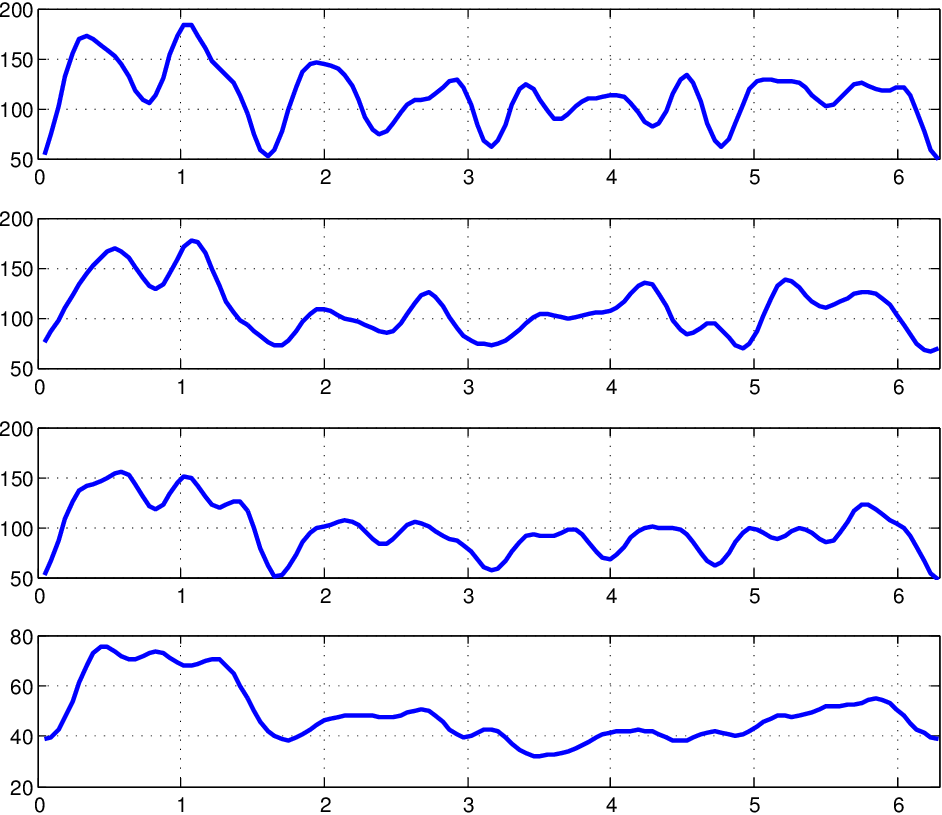}}
\hbox to \hsize{\bf \hfill(a) \hfill\hfill (b)\hfill\hfill (c)\hfill\hfill (d)\hfill}
\caption{Rotated image regions correspond to shifted curves. (a) and (b) are an image region and the corresponding curves used in the definition of
the LoFT descriptor. (c) is the image region (a) rotated by the angle $\pi/4$, the corresponding curves are depicted in (d). We clearly see that the four curves
in (d) are obtained from those in (b) by shifting to left (with a shift equal to $\pi/2 \approx 1.57$. }
\label{fig:4}
\end{figure}

\subsection{Experimental evaluation}

To empirically assess the properties of the LoFT descriptor and the matching criterion defined in (\ref{eq:5.5}), we carried out some numerical experiments.
All the codes and the images necessary to reproduce the results and the figures reported in this section can be freely downloaded from the website \url{http://imagine.enpc.fr/~dalalyan/LoFT}.

We chose two grayscale images of resolution $300\times450$ that coincide up to a rotation by an angle $\pi/2$. We degraded these images by adding two
independent white Gaussian noises of variance $\sigma^2$. This resulted in two noisy images $I$ and $I^\diese$ depicted in Fig.~\ref{fig:5} (left panels).
Then we chose at random $L = 10^4$ pairs of truly matching points $(\mathbf x_\ell,\mathbf x_\ell^\diese)$. The only restriction made on these points is
that the distance between two distinct points $\mathbf x_\ell$ and $\mathbf x_{\ell'}$ is at least of $5$ pixel. Then we chose points
$\tilde{\mathbf x}_{\ell}^\diese$ such that $\tilde{\mathbf x}_{\ell}^\diese={\mathbf x}_{\ell}^\diese + (10,10)$ which we used as false matches with $\mathbf x_\ell$'s. We computed the corresponding LoFT
descriptors in each image. This yielded three sets of vectors $\{\bY_\ell\}$, $\{\bY^\diese_\ell\}$ and $\{\tilde\bY^\diese_\ell\}$. Finally,
the values of the test statistic $\Delta$
were computed for the pairs $(\bY_\ell, \bY^\diese_\ell)$ and $(\bY_\ell, \tilde\bY^\diese_{\ell})$. We obtained two samples $\delta_1,\ldots,\delta_L$ and
$\delta'_1,\ldots,\delta'_L$. The first sample characterizes the behavior of the test statistic under the null (\textit{i.e.}, for true matches),
whereas the second sample characterizes the behavior of the test statistic under the alternative (false matches).

The parallel boxplots of these two samples, computed for several values of $\sigma$, are plotted in the right-bottom panel of Fig.~\ref{fig:5}.
Two scenarios were considered: known $\sigma$ and unknown $\sigma$. In the second scenario the estimator of $\sigma$ proposed by \cite{Immerkaer}
were used and injected in (\ref{eq:5.5}) instead of $\sigma$. The results of the first scenario are plotted in the first row of the right-bottom
panel of Fig.~\ref{fig:5}, while those of the second scenario are in the second row. One can note that the different values for the noise level considered
in these experiments are $\sigma\in\{5,10,30,60\}$.

In the light of these figures, several observations can be made.  Perhaps the most striking one is that even for a noise level as high as\footnote{Note that the
standard deviation of the image intensities in this example being equal to 40.25, the signal-to-noise ratio is very small.} $\sigma=30$, there is a clear
separation between the two samples. Furthermore, the top-right panel of Fig.~\ref{fig:5} shows that the distribution of the test statistic under the null
is extremely close to the Gaussian distribution, as proved in our theoretical results. Therefore, choosing as $\lambda$ any reasonable quantile
($95\%$, $99\%$, $99.9\%$) of this distribution results in rejecting all the false matches. In other terms, the $p$-values associated to the elements
of the second sample, the one of false matches, are all below the level of $0.1\%$.

\begin{figure}[ht!]
\hbox to \hsize{\hfill
\hbox{\includegraphics[width=0.45\textwidth]{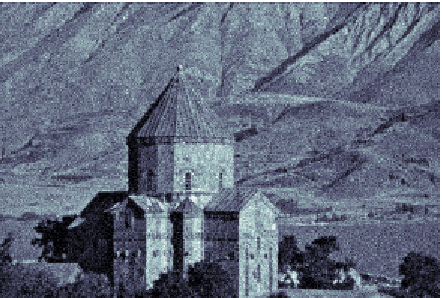}}\hfill
\hfill\includegraphics[width=0.45\textwidth]{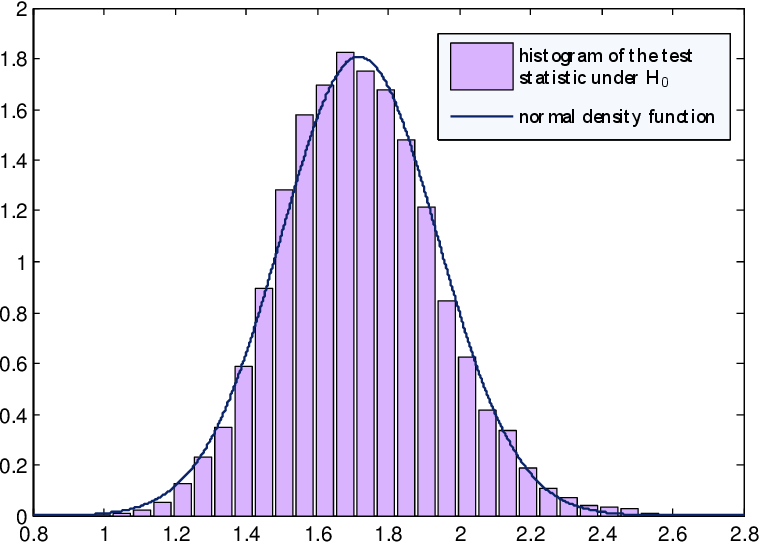}\hfill}
\medskip
\hbox to \hsize{
\hfill\lower-25pt\hbox{\includegraphics[height=0.45\textwidth]{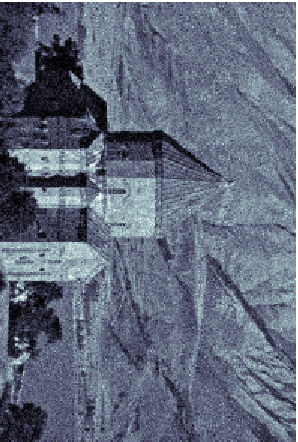}}
\hfill\includegraphics[height=0.55\textwidth]{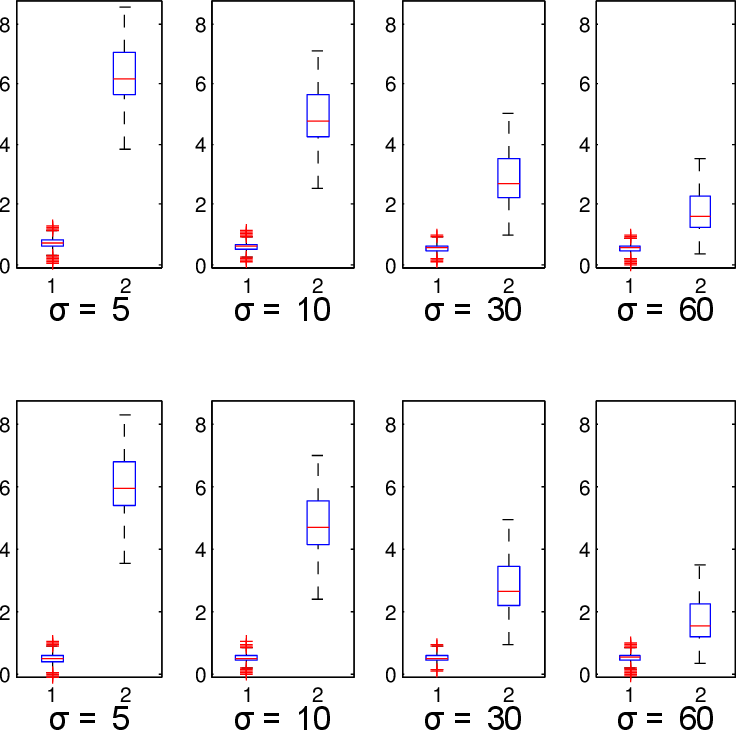}\hfill}
\caption{Experimental evaluation of the discriminative power of the LoFT descriptor. \textit{Top-left and bottom-left}: two noisy images used in our experiments that coincide up to a rotation by angle $\pi/2$. \textit{Top-right}: the histogram of the test statistic $\Delta$ (cf. (\ref{eq:5.5})) computed for $10^4$ randomly chosen pairs of truly matching points. The true noise level is equal to 30, for image intensities ranging from 0 to 255. The value of $\sigma$ used in (\ref{eq:5.5}) is the estimated noise level computed by the procedure described in \citep{Immerkaer}. One can observe that the distribution is very close to a Gaussian one. \textit{Bottom-right}: the boxplots of the logarithm of the test statistic $\Delta$ for true matches and for false matches computed for different noise levels.
In the first (top) row the true $\sigma$ is used in (\ref{eq:5.5}), while in the second row we used the estimator provided
by \cite{Immerkaer}. A remarkable property is that the boxplots under $\mathbf H_0$ are almost not impacted by the change of $\sigma$. It is also noteworthy that the boxplots of true matches are well separated from those of false matches for all values of $\sigma$ except for $\sigma=60$.}
\label{fig:5}
\end{figure}

A second important observation is that the result is almost not impacted by the substitution of the true noise variance by its estimated value. This may
be very useful for applying LoFT descriptors to very noisy images such as those encountered in medical imaging and astrophysics. A last observation is that
when $\sigma=60$, the noise is so strong that nearly $5\%$ of false matches are classified as true matches, when the threshold $\lambda$ in (\ref{eq:5.5})
is chosen equal to $2.22$, which is the $99\%$-quantile of the distribution of the test statistic under the null. This is not so surprising and shows the limits
of the presented approach.

To close this section, let us stress that the primary aim here was to show the applicability and the potential of the proposed approach. A more comprehensive
experimental evaluation of the discriminative power of the LoFT descriptor comparing it to other state-of-the-art descriptors is the subject of an ongoing work.

\section{Conclusion}

In the present work, we provided a methodological and theoretical analysis of the curve registration problem from a
statistical standpoint based on the nonparametric goodness-of-fit testing. In the case where the noise is white Gaussian and
additive with a small variance we established that under the null hypothesis the penalized
log-likelihood ratio statistic is asymptotically distribution free. This result is valid for the weighted $l^2$-penalization
under some mild assumptions on the weights. Furthermore, we proved that the test based on the Gaussian (or chi-squared)
approximation of the penalized log-likelihood ratio  statistic is consistent. These results naturally carry over to other
nonparametric models for which asymptotic equivalence (in the Le Cam sense) with the Gaussian white noise has been proven.

It can be interesting, however, to develop a direct inference in these models. In particular, the model of spatial Poisson processes
(cf.\ \cite{Ingster_Kutoyants}) can be of special interest because of its applications in image analysis. 
Some important issues closely related to the present work have been treated in the companion paper \cite{Collier2012}. In that paper,
focusing on the case of equal variances of noise and considering only projection weights, the minimax rate of separation of the null hypothesis
from the alternative is obtained (up to a $\log$-factor) and an adaptive test is proposed. It should be mentioned, that although the procedure
proposed in \cite{Collier2012} is adaptive and achieves the asymptotically minimax rates of separation, it is not necessarily suitable for the
real applications because it is often overly conservative. This is due to the fact that the minimax procedures are essentially designed to be
optimal in the worst cases, but can be outperformed in the favorable situations.

Finally, we demonstrated that the main ideas introduced in the present work lead to a new local descriptor for digital images tailored to
testing for rotation between two image regions. The optimization of the implementation of this descriptor and its more systematic
evaluation on various benchmark datasets used in computer vision is another promising direction of future research.

\appendix
\renewcommand*{\thesection}{Appendix \Alph{section}}

\section{Proofs of the theorems}\label{sec:4}

This and the following appendices contain the technical proofs of the theorems stated in previous sections.
For the sake of self-containedness, we provide all the details of the proofs although some of them---such as
the Berman theorem---have been already used in earlier references (see, for instance, \cite{Collier2012}).

The proof of Wilks' phenomenon is divided into several parts. First we assume that $\mathbf{H_0}$ is true and study the convergence of the pseudo-estimator $\hat\tau$
(of the shift $\bar\tau^*$) defined as the maximizer of the log-likelihood over the interval $[\bar\tau^*-\pi,\bar\tau^*+\pi]$. Here, $\bar\tau^*$ is an element of $[0,2\pi[$
such that $c_j=e^{-\ii j\bar\tau^*}c_j^\diese$, for all $j\ge 1$.

\subsection{Maximizer of the log-likelihood}

\begin{proposition}\label{prop:1}
Let  $\bc\in\calF_{s,L}$ for some $s\in(0,1]$, $L>0$ and let $|c_1| >0$. If the shrinkage weights $\nu_j$ satisfy conditions {\bf(A)} and {\bf(B)},
then the solution $\hat\tau$ to the optimization problem
\begin{equation*}
\hat{\tau} = \arg\max_{\tau:|\tau-\bar\tau^*|\le \pi} M(\tau),\qquad \text{with}\qquad
M(\tau) = \sum_{j\ge 1} \nu_j \Re (e^{\ii j\tau} Y_j \overline{\Ydiese_j})
\end{equation*}
satisfies the asymptotic relation
\begin{equation*}
|\hat{\tau}-\bar\tau^*| = \sigma_*\sqrt{\log N_{\sigma_*}}\big(N_{\sigma_*}^{1-s}+\sigma_* N_{\sigma_*}^{3/2}\big)O_P(1),\qquad\text{as}\quad\sigma_*\to 0.
\end{equation*}
\end{proposition}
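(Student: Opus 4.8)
The plan is to treat $\hat\tau$ as an M-estimator and reduce the rate to a uniform control of the stochastic fluctuation of the criterion $M$ around its mean. First I would compute the mean. Writing $\theta=\tau-\bar\tau^*$ and using $H_0$, i.e.\ $c_j=e^{-\ii j\bar\tau^*}\cdiese_j$, together with the independence and centering of $\epsilon_j,\epsdiese_j$, one gets $\esp[Y_j\overline{\Ydiese_j}]=c_j\overline{\cdiese_j}=e^{-\ii j\bar\tau^*}|c_j|^2$, whence
$$
\esp[M(\tau)]=\sum_{j\ge 1}\nu_j|c_j|^2\cos\big(j(\tau-\bar\tau^*)\big).
$$
This deterministic function is maximized at $\theta=0$, and its decisive feature is a quantitative curvature: every summand $1-\cos(j\theta)$ is non-negative and $\nu_1=1$ by {\bf(A)}, so the elementary inequality $1-\cos\theta\ge \tfrac{2}{\pi^2}\theta^2$ on $[-\pi,\pi]$ yields
$$
\esp[M(\bar\tau^*)]-\esp[M(\tau)]\ \ge\ |c_1|^2\big(1-\cos\theta\big)\ \ge\ \frac{2|c_1|^2}{\pi^2}\,\theta^2,\qquad |\theta|\le\pi.
$$
This is exactly where the restriction $|\tau-\bar\tau^*|\le\pi$ in the definition of $\hat\tau$ and the assumption $|c_1|>0$ enter.

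Next I would run the standard argmax argument. Set $Z(\tau)=M(\tau)-\esp[M(\tau)]$; since $\nu_j=0$ for $j>N_\sigma$ by {\bf(A)}, $Z$ is a trigonometric polynomial of degree $N_\sigma$, hence smooth in $\tau$. By definition $M(\hat\tau)\ge M(\bar\tau^*)$, so combining the curvature bound (first inequality) with $M(\hat\tau)-M(\bar\tau^*)\ge 0$ (second inequality),
$$
\frac{2|c_1|^2}{\pi^2}(\hat\tau-\bar\tau^*)^2\ \le\ \esp[M(\bar\tau^*)]-\esp[M(\hat\tau)]\ \le\ Z(\hat\tau)-Z(\bar\tau^*).
$$
The mean value inequality gives $|Z(\hat\tau)-Z(\bar\tau^*)|\le \big(\sup_{|\tau-\bar\tau^*|\le\pi}|Z'(\tau)|\big)\,|\hat\tau-\bar\tau^*|$, and dividing by $|\hat\tau-\bar\tau^*|$ leaves
$$
|\hat\tau-\bar\tau^*|\ \le\ \frac{\pi^2}{2|c_1|^2}\,\sup_{|\tau-\bar\tau^*|\le\pi}|Z'(\tau)|.
$$
Thus the whole statement reduces to showing that the right-hand side is of order $\sigma\sqrt{\log N_\sigma}(1+\sigma N_\sigma^{3/2})$ in probability; note that this single bound delivers both consistency and the rate, so no separate consistency step is needed.

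To estimate $\sup_\tau|Z'(\tau)|$ I would expand the score. Using $Y_j\overline{\Ydiese_j}-\esp[Y_j\overline{\Ydiese_j}]=\sigma\big(c_j\overline{\epsdiese_j}+\epsilon_j\overline{\cdiese_j}\big)+\sigma^2\epsilon_j\overline{\epsdiese_j}$ and differentiating in $\tau$ (each differentiation brings down a factor $j$), one gets $Z'=Z'_L+Z'_Q$, where $Z'_L$ is linear in the noise and $Z'_Q=-\sigma^2\sum_{j}\nu_j j\,\Im\big(e^{\ii j\tau}\epsilon_j\overline{\epsdiese_j}\big)$ is the bilinear term. For the linear part, a pointwise variance computation using $\nu_j\le 1$ and the Sobolev bound $\sum_j j^2|c_j|^2\le L^2$ (the only place $\bc\in\calF_{1,L}$ is used) gives $\var(Z'_L(\tau))\lesssim\sigma^2L^2$, so $Z'_L(\tau)$ has standard deviation of order $\sigma$ at each $\tau$; as a centered Gaussian trigonometric polynomial of degree $N_\sigma$ it then satisfies $\sup_\tau|Z'_L(\tau)|=O_P(\sigma\sqrt{\log N_\sigma})$ by a maximal inequality. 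For the quadratic part I would condition on $\{\epsdiese_j\}$: then $Z'_Q$ is centered Gaussian in $\{\epsilon_j\}$ with conditional variance $\asymp\sigma^4\sum_j\nu_j^2 j^2|\epsdiese_j|^2$, which concentrates around $\sigma^4\sum_j\nu_j^2 j^2\lesssim\sigma^4 N_\sigma^3$, yielding $\sup_\tau|Z'_Q(\tau)|=O_P(\sigma^2 N_\sigma^{3/2}\sqrt{\log N_\sigma})$. Adding the two gives $\sup_\tau|Z'(\tau)|=O_P\big(\sigma\sqrt{\log N_\sigma}(1+\sigma N_\sigma^{3/2})\big)$, which is precisely the asserted rate.

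I expect the main obstacle to be these two maximal inequalities, and in particular the handling of the second-order chaos $Z'_Q$: obtaining the correct $\sqrt{\log N_\sigma}$ (rather than $\log N_\sigma$) factor requires exploiting the decoupled product structure $\epsilon_j\overline{\epsdiese_j}$ via the conditioning above, together with a concentration estimate ensuring the random conditional variance does not exceed a constant multiple of its mean with high probability. The Gaussian suprema themselves are routine given the degree-$N_\sigma$ trigonometric structure, but some care is needed to control the oscillation of $Z'$ between grid points, for which Bernstein's inequality for trigonometric polynomials (bounding $\|Z''\|_\infty$ by $N_\sigma\|Z'\|_\infty$) is convenient.
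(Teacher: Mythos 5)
Your proposal is correct and follows essentially the same route as the paper's proof: the quadratic curvature bound $\esp[M(\bar\tau^*)]-\esp[M(\tau)]\ge \frac{2|c_1|^2}{\pi^2}(\tau-\bar\tau^*)^2$ coming from $\nu_1=1$ and $|c_1|>0$, the argmax inequality reducing the rate to $\sup_\tau|Z'(\tau)|$, and the split of $Z'$ into a linear Gaussian trigonometric polynomial (sup of order $\sigma\sqrt{\log N_\sigma}$ via a Berman-type maximal inequality) and a second-order chaos handled by conditioning on $\{\epsdiese_j\}$ plus chi-squared concentration of the conditional variance (sup of order $\sigma^2 N_\sigma^{3/2}\sqrt{\log N_\sigma}$), exactly as in the paper's Lemmas on maxima of random sums. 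The only cosmetic difference is that you state the argmax step as a deterministic bound $|\hat\tau-\bar\tau^*|\le \frac{\pi^2}{2|c_1|^2}\sup_\tau|Z'(\tau)|$ while the paper phrases it as a tail-probability inequality; the two are equivalent.
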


\begin{proof}[Proof of Proposition~\ref{prop:1}]
Throughout this proof, we work under the null hypothesis $\mathbf{H_0}$. If we set $\eta_j=e^{-\ii j\bar\tau^*}{\epsilon_j}$ and $\etadiese_j=\epsdiese_j$, we can write the decomposition
\begin{align*}
M(\tau)  - \esp[M(\tau)] = \sigma_* S(\tau)+ \sigma\sigmadiese D(\tau+\bar\tau^*),
\end{align*}
where
\begin{align*}
S(\tau)  = \sum_{j\geq1} \nu_j \Re\Big\{ e^{\ii j\tau}\Big(\frac{\sigma}{\sigma_*}\overline{c_j}\eta_j+\frac{\sigmadiese}{\sigma_*} c_j\overline{\etadiese_j}\Big)\Big\} ,\quad
D(\tau)    = \sum_{j\geq1} \nu_j \Re \big(e^{\ii j\tau}\eta_j \overline{\etadiese_j}\big).
\end{align*}
Furthermore, the expectation of $M(\tau)$ is given by $\esp[M(\tau)] = \sum_{j\geq1} \nu_j |c_j|^2 \cos[j(\tau- \bar\tau^*)]$. In what follows, for every function $f:\RR\to\RR$,
we denote by $\|f\|_\infty$ the supremum over $\RR$ of the function $f$.

On the one hand, using the assumption $|c_1|>0$ along with condition (A), we get that for every $\tau\in[\bar\tau^*-\pi,\bar\tau^*+\pi]$ it holds
\begin{align*}
\frac{\esp\big[M(\tau)\big]-\esp\big[M(\bar\tau^*)\big]}{(\tau-\bar\tau^*)^2} &\leq -\nu_1 |c_1|^2 \frac{1-\cos(\tau-\bar\tau^*)}{(\tau-\bar\tau^*)^2}
\le -\frac{2|c_1|}{\pi^2} \triangleq C < 0.
\end{align*}
Therefore,
\begin{align*}
M(\tau)-M(\bar\tau^*) &= \esp[M(\tau)] - \esp[M(\bar\tau^*)] + \sigma_* \big[ S(\tau)-S(\bar\tau^*)\big] + \sigma\sigmadiese \big[ D(\tau)-D(\bar\tau^*)\big]\\
            & \le -C\,|\tau-\bar\tau^*|^2 + \sigma_* |\tau-\bar\tau^*|\cdot\|S'\|_\infty + \sigma_*^2 |\tau-\bar\tau^*|\cdot\|D'\|_\infty\\
            & = |\tau-\bar\tau^*|\big\{\sigma_* \|S'\|_\infty + \sigma_*^2 \|D'\|_\infty-C|\tau-\bar\tau^*|\big\}.
\end{align*}
Replacing in this inequality $\tau$ by $\hat\tau$ and using that $M(\hat\tau)-M(\bar\tau^*)\ge 0$, we get
\begin{equation}\label{eq:14}
|\hat\tau-\bar\tau^*| \le C^{-1}\big\{\sigma_* \|S'\|_\infty + \sigma_*^2 \|D'\|_\infty\big\}.
\end{equation}
On the other hand, we have
\begin{equation*}
S'(\tau) = \frac{\sqrt{\sigma^2+{\sigmadiese}^2}}{\sigma_*}\sum_{j\geq1} j |c_j| \nu_j \Re\big( e^{\ii j\tau}\zeta_j \big),
\end{equation*}
where $\zeta_j$ are i.i.d. complex valued random variables, whose real and imaginary parts are independent standard Gaussian random variables.
Therefore, the large deviations of the sup-norm of $S'$ can be controlled using the following lemma.

\begin{lemma}\label{lem:2.5} Let $\bs=(s_1,\ldots,s_N)$ be a vector from $\RR^{N}$ and let  $\{\xi_j\}$ and $\{\xi'_j\}$
be two independent sequences of i.i.d.\ $\calN(0,1)$ random variables.
The sup-norm of the function $Z(t) = \sum_{j=1}^N s_j \{\cos(jt)  \xi_j+\sin(jt)  \xi'_j\}$, satisfies
\begin{equation*}
\prob( \|Z\|_\infty \geq \|\bs\|_2 x) \leq (N+1) e^{-x^2/2},\qquad \forall x>0.
\end{equation*}
\end{lemma}
\begin{proof}
See \ref{sec:5}.
\end{proof}

To apply this result to $Z(t)=S'(t)$, we choose $s_j= \frac{\sqrt{\sigma^2+{\sigmadiese}^2}}{\sigma_*}j |c_j|\nu_j$, which leads to a vector
$\bs=(s_1,\ldots,s_{N_{\sigma_*}})$ with Euclidean norm
$$
\|\bs\|_2^2=\frac{\sigma^2+{\sigmadiese}^2}{\sigma_*^2} \sum_{j=1}^{N_{\sigma_*}}j^2 |c_j|^2\nu_j^2\le 2 N_{\sigma_*}^{2(1-s)} L^2.
$$
The last inequality follows from the fact that $\bc\in\calF_{s,L}$, $\sigma_*=\max(\sigma,\sigmadiese)$ and $\nu_j\in[0,1]$ for every $j$.
Using this bound, Lemma~\ref{lem:2.5} and the fact that $N_{\sigma_*}\ge 1$, we get that the inequality
\begin{equation}\label{eq:15}
\prob\Big(\|S'\|_\infty \ge 2LN_{\sigma_*}^{1-s}\sqrt{\log(4N_{\sigma_*}/\alpha)}\Big)\le \frac{\alpha}{2}
\end{equation}
holds true for every $\alpha\in(0,1)$.

Finally, the large deviations of the term $\|D'\|_\infty$ are controlled by the following lemma.

\begin{lemma}\label{lem:5}
Let $N$ be some positive integer and let $\eta_j$, $\etadiese_j$, $j=1,\ldots,N$ be independent complex valued random variables such
that their real and imaginary parts are independent standard Gaussian variables. Let $\bs=(s_1,\ldots,s_N)$ be a vector of real numbers.
Denote $Z(t) = \sum_{j=1}^{N} s_j \Re\big(e^{\ii jt}\eta_j\etadiese_j\big)$ for every $t$ in $[0,2\pi]$ and $\|Z\|_\infty=
\sup_{t\in[0,2\pi]} |Z(t)|$. Then,
\begin{equation*}
\prob\Big\{\|Z\|_\infty> \sqrt{2} x\big(\|\bs\|_2+y\|\bs\|_\infty\big) \Big\} \le (N+1)e^{-x^2/2}+e^{-y^2/2}, \qquad \forall x,y>0.
\end{equation*}
\end{lemma}
\begin{proof}
See~\ref{sec:5}.
\end{proof}

In order to bound the sup-norm of  $D'(\cdot)$ using Lemma~\ref{lem:5}, we set $N=N_{\sigma_*}$ and $s_j=j\nu_j$ for all $j=1,\ldots,N_{\sigma_*}$.
This yields $\|\bs\|_2\le N_{\sigma_*}^{3/2}$ and $\|\bs\|_\infty\le N_{\sigma_*}$. Therefore,
\begin{equation}\label{eq:16a}
\prob\Big\{\|D'\|_\infty> \sqrt{2} xN_{\sigma_*}\big(\sqrt{N_{\sigma_*}}+y\big) \Big\} \le (N_{\sigma_*}+1)e^{-x^2/2}+e^{-y^2/2}, \qquad \forall x,y>0.
\end{equation}
For any $\alpha\in(0,1)$, choosing $x=\sqrt{2\log(8N_{\sigma_*}/\alpha)}$ and $y=\sqrt{2\log(4/\alpha)}$, we arrive at
\begin{equation}\label{eq:16b}
\prob\Big\{\|D'\|_\infty> 2 N_{\sigma_*}\sqrt{\log(8N_{\sigma_*}/\alpha)}\big(\sqrt{N_{\sigma_*}}+\sqrt{\log(4/\alpha)}\big) \Big\} \le \frac{\alpha}{2}.
\end{equation}
Inequalities (\ref{eq:14}), (\ref{eq:15}) and (\ref{eq:16b}) imply that  $|\hat\tau-\bar\tau^*|$ is, in probability, at most of the order
$\sigma_*\sqrt{\log N_{\sigma_*}}\big(N_{\sigma_*}^{1-s}+\sigma_* N_{\sigma_*}^{3/2}\big)$.
\end{proof}

\subsection{Proof of Theorem~\ref{th:1}}\label{A.2}

Recall that we present the proof in the case $\bc\in\calF_{s,L}$ for $s>7/8$. The claim of Theorem~\ref{th:1} can be readily obtained
by taking $s=1$.

One can check that, under $\mathbf{H_0}$,
\begin{align}\label{eq:19}
\Delta_{\bnu,\sigma_*}(\bY^{\bullet,\diese})
					&= \frac{1}{2(\sigma^2+(\sigmadiese)^2)}\min_{\tau\in[0,2\pi[} \Bigg[\sum_{j=1}^{+\infty} \nu_j \big| {Y_j - e^{-\ii j\tau} \Ydiese_j} \big|^2 \Bigg]\\
	       	&= \frac{1}{2(\sigma^2+(\sigmadiese)^2)} \min_{|\tau-\bar\tau^*|\le\pi} \big\{D_{\sigma_*}(\tau)+2C_{\sigma_*}(\tau)+P_{\sigma_*}(\tau)\big\},
\end{align}
where we have used the notation:
\begin{align*}
D_{\sigma_*}(\tau) &= \sum_{j=1}^{+\infty}\limits \nu_j |c_j|^2 \big| {1 - e^{-\ii j(\tau-\bar\tau^*)}}\big|^2, & \text{(deterministic term)}\\
C_{\sigma_*}(\tau) &= \sum_{j=1}^{+\infty}\limits  \nu_j \Re \big[c_j\big(1 - e^{-\ii j(\tau-\bar\tau^*)}\big)
\big(\overline{\sigma\epsilon_j-e^{-\ii j\tau}\sigmadiese\epsdiese_j}\big)\big],& \text{(cross term)}\\
P_{\sigma_*}(\tau) &= \sum_{j=1}^{+\infty}\limits \nu_j \big|\sigma\epsilon_j - e^{-\ii j\tau}\sigmadiese\epsdiese_j\big|^2.& \text{(principal term)}
\end{align*}
(Since $\mathbf{H_0}$ is assumed satisfied, there exists $\bar\tau^*\in[0,2\pi[$ such that $c_j=e^{-\ii j\bar\tau^*}c^\diese_j$ for all $j\ge 1$.)
We denote by $\hat\tau$ the pseudo-estimator of $\bar\tau^*$ defined as the minimizer of the right-hand side of (\ref{eq:19}) over the interval
$[\bar\tau^*-\pi,\bar\tau^*+\pi]$ and study the asymptotic behavior of the terms $D_{\sigma_*}$, $C_{\sigma_*}$ and $P_{\sigma_*}$ separately.

For the deterministic term, in view of Proposition~\ref{prop:1}, it holds that
\begin{align*}
|D_{\sigma_*}(\hat\tau)| &\leq \sum_{j=1}^{+\infty} j^2\nu_j |c_j|^2 {(\hat\tau-\bar\tau^*)^2}
                     \leq {(\hat\tau-\bar\tau^*)^2} \sum_{j=1}^{N_{\sigma_*}} j ^2 |c_j|^2\le N_{\sigma_*}^{2(1-s)}
										L^2(\hat\tau-\bar\tau^*)^2\nonumber\\
                  &=\{\sigma_*^2N_{\sigma_*}^{2(1-s)}(N_{\sigma_*}^{2(1-s)}+\sigma_*^2N_{\sigma_*}^3)\log N_{\sigma_*}\}\,O_p(1).
\end{align*}
Therefore,
\begin{align}\label{eq:10a}
\frac{|D_{\sigma_*}(\hat\tau)|}{\sigma_*^2\|\bnu\|_2} &\leq
                  \{(N_{\sigma_*}^{\frac72-4s}+\sigma_*^2N_{\sigma_*}^{\frac92-2s})\log N_{\sigma_*}\}\,O_p(1).
\end{align}
Let us turn now to the cross term. As $C_{\sigma_*}(\bar\tau^*) = 0$, we have
\begin{equation*}
|C_{\sigma_*}(\hat\tau)| \leq  |\hat\tau-\bar\tau^*| \cdot \|C_{\sigma_*}'\|_\infty.
\end{equation*}
Furthermore, if we define  $\xi_j$ and $\xi_j'$ as respectively the real and the imaginary parts of the random variable
$\frac{c_j}{|c_j|\sqrt{\sigma^2+\sigmadiese{}^2}}\big({\sigma\bar{\epsilon}_j-e^{\ii j\bar\tau^*}\sigmadiese\bar{\epsdiese_j}}\big)$, we get
\begin{align*}
C'_{\sigma_*}(\bar\tau^*+t) &= \sqrt{\sigma^2+\sigmadiese{}^2}\sum_{j=1}^{+\infty} j\nu_j |c_j|\big[\sin(jt)\xi_j+ \cos(jt)\xi_j'\big] ,\qquad\forall t\in\RR.
\end{align*}
Using Lemma~\ref{lem:2.5}, we check that $\|C_{\sigma_*}'\|_\infty$ is, in probability, of the order  $\{\sigma_*N_{\sigma_*}^{1-s}\sqrt{\log N_{\sigma_*}}\}$. Therefore, in view of Proposition~\ref{prop:1},
it holds that
$$
|C_{\sigma_*}(\hat\tau)| = \{\sigma_*^2(N_{\sigma_*}^{1-s}+\sigma_* N_{\sigma_*}^{3/2})N_{\sigma_*}^{1-s}\log N_{\sigma_*}\}\,O_p(1)
$$
and, therefore,
\begin{align}
\frac{|C_{\sigma_*}(\hat\tau)|}{\sigma_*^2\|\bnu\|_2} =
\{(N_{\sigma_*}^{\frac32-2s}+\sigma_* N_{\sigma_*}^{2-s})\log N_{\sigma_*}\}\,O_p(1).
\end{align}

Let us study the last term, $P_{\sigma_*}(\tau) = \sum_{j=1}^{+\infty} \nu_j \big|{\sigma\epsilon_j - e^{-\ii j\tau}\sigmadiese\epsdiese_j} \big|^2$, which will determine the asymptotic behavior of the test statistic. Denoting
$\eta_j=e^{\ii j\bar\tau^* } {\epsilon_j}$ and $\etadiese_j={\epsdiese_j}$,
we can rewrite this term as $P_{\sigma_*}(\tau) = \sum_{j=1}^{+\infty} \nu_j \big|{\sigma\eta_j - e^{-\ii j(\tau-\bar\tau^*)}\sigmadiese\etadiese_j} \big|^2$.
We wish to prove that under the null hypothesis $\mathbf{H_0}$, if conditions {\bf(A)}, {\bf(B)}, $N_{\sigma_*}\to +\infty$ and $\sigma_*^2 N_{\sigma_*}^{5/2}\log(N_{\sigma_*}) = o_P(1)$
are fulfilled, then
\begin{equation*}
H_{\sigma_*}(\hat\tau) = \frac{P_{\sigma_*} (\hat\tau) - 2(\sigma^2+(\sigmadiese)^2)\|\bnu\|_1 }{2(\sigma^2+(\sigmadiese)^2)\|\bnu\|_2} \convloi \mathcal{N}(0,1).
\end{equation*}
To check this property, we decompose the principal term as follows:
\begin{align*}
H_{\sigma_*}(\hat\tau) &= H_{\sigma_*}(\bar\tau^*)+R_{\sigma_*}(\hat\tau),\qquad\text{with}\qquad
R_{\sigma_*}(\hat\tau)=\frac{P_{\sigma_*} (\hat\tau)-P_{\sigma_*} (\bar\tau^*)}{2(\sigma^2+(\sigmadiese)^2)\|\bnu\|_2}.
\end{align*}
We start by writing $H_{\sigma_*}(\bar\tau^*)$ as
\begin{equation*}
H_{\sigma_*}(\bar\tau^*) = \sum_{j=1}^{N_{\sigma_*}} X_{j,\sigma_*},\qquad \text{ with }\qquad X_{j,\sigma_*} = \frac{ \nu_j}{2\|\bnu\|_2}\Big (\Big|\frac{\sigma\eta_j-\sigmadiese\etadiese_j}{(\sigma^2+(\sigmadiese)^2)^{1/2}}\Big|^2-2\Big) ,
\end{equation*}
and, by applying the Berry-Esseen inequality \cite[Theorem 5.4]{Petrov}, which is valid since  the $X_{j,\sigma_*}$'s are independent random variables with mean $0$ and finite third moment.
Furthermore, we have
$$
B_{\sigma_*} = \sum_{j=1}^{N_{\sigma_*}} \var \big( X_{j,\sigma_*} \big) = 1,\qquad\text{ and}\qquad
L_{\sigma_*} = B_{\sigma_*}^{-\frac{3}{2}} \, \sum_{j=1}^{N_{\sigma_*}} \esp |X_{j,\sigma_*}|^3 \le C \, N_{\sigma_*}^{-\frac{1}{2}}.
$$
Therefore, the Berry-Esseen inequality yields
$\sup_x |F_{\sigma_*}(x)-\Phi(x)| \leq K\,L_{\sigma_*}$,
where $\Phi$ is the c.d.f.\ of the standard Gaussian distribution,
$F_{\sigma_*}(x) = \prob\big(B_{\sigma_*}^{-\frac{1}{2}} \sum_{j=1}^{N_{\sigma_*}} X_{j,\sigma_*} < x\big)$
and $K$ is an absolute constant. Hence
\begin{equation*}
H_{\sigma_*}(\bar\tau^*) \convloi \mathcal{N}(0,1).
\end{equation*}
It remains to prove that $R_{\sigma_*}(\hat\tau)$ tends to $0$ in probability, which---in view of Slutski's lemma---will be sufficient for completing the proof.
By the mean value theorem, there exists some real number $t$  between $\hat\tau$ and $\bar\tau^*$ such that
\begin{align*}
R_{\sigma_*}(\hat\tau)
    &= \frac{\sigma\sigmadiese}{\sigma^2+(\sigmadiese)^2}\sum_{j=1}^{+\infty} \frac{\nu_j}{\|\bnu\|_2}
        \Re \big(\eta_j\overline{\etadiese_j}\,(e^{\ii j(\hat\tau-\bar\tau^*)}-1) \big)\\
    &= \frac{\sigma\sigmadiese}{\sigma^2+(\sigmadiese)^2} \sum_{j=1}^{N_{\sigma_*}}
        \frac{j\nu_j(\hat\tau-\bar\tau^*)}{\|\bnu\|_2}\Re\big(e^{\ii jt} \eta_j\overline{\etadiese_j}\big).
\end{align*}
Then, by virtue of Lemma~\ref{lem:5},
\begin{align*}
|R_{\sigma_*}(\hat\tau) | &\le \frac{\sigma\sigmadiese |\hat\tau-\bar\tau^*|}{(\sigma^2+(\sigmadiese)^2)\|\bnu\|_2}
\sup_{t\in[0,2\pi]}\Big|\sum_{j=1}^{N_{\sigma_*}} j\nu_j\Re\big(e^{\ii jt} \eta_j\overline{\etadiese_j}\big)\Big|\\
&=\{\sigma_*(N_{\sigma_*}^{1-s}+\sigma_* N_{\sigma_*}^{3/2})N_{\sigma_*}\log N_{\sigma_*}\}\cdot O_P(1).
\end{align*}
Combining all these relations, we get
\begin{align*}
\frac{(\Delta_{\bnu,\sigma_*}(\bY^{\bullet,\diese})-\|\bnu\|_1)}{\|\bnu\|_2}
		&= \frac{D_{\sigma^*}(\hat\tau) +2C_{\sigma_*}(\hat\tau)}{2(\sigma^2+\sigmadiese{}^2)\|\bnu\|_2}+H_{\sigma_*}(\bar\tau^*)+
		R_{\sigma_*}(\hat\tau)\\
		&= H_{\sigma_*}(\bar\tau^*)+\{N_{\sigma_*}^{\frac72-4s}+\sigma_*^2N_{\sigma_*}^{\frac92-2s}+
		N_{\sigma_*}^{\frac32-2s}+\sigma_* N_{\sigma_*}^{2-s}+\sigma_*^2 N_{\sigma_*}^{\frac52}\}O_P(\log N_{\sigma_*}).
\end{align*}
Under the assumptions $s>7/8$, $N_{\sigma_*}\to\infty$ and $\sigma_*^2N_{\sigma_*}^{-2s+9/2}\log N_{\sigma_*}\to 0$, as $\sigma_*\to 0$,
we infer the
relation ${(\Delta_{\bnu,\sigma_*}(\bY^{\bullet,\diese})-\|\bnu\|_1)}/{\|\bnu\|_2} = H_{\sigma_*}(\bar\tau^*) + O_P(1)$.
The application of the Slutsky lemma completes the proof.

\subsection{Power of the test}

The aim of this section is to present a proof of Theorem~\ref{th:2}. To this end, we study the test statistic
$T_{\sigma_*}={(\Delta_{\bnu,\sigma_*}(\bY^{\bullet,\diese})-\|\bnu\|_1)}/{\|\bnu\|_2}$, and show that it tends to $+\infty$ in probability under $\mathbf{H_1}$.
Actually, the hypothesis $\mathbf{H_1}$ will be supposed to be satisfied throughout this section. It holds true that:
\begin{align*}
\Delta_{\bnu,\sigma_*}(\bY^{\bullet,\diese}) &= \frac{1}{\sigma_*^2} \min_{\tau\in[0,2\pi]} \sum_{j\geq 1} \nu_j
\big| Y_j - e^{-\ii j\tau} \Ydiese_j\big|^2 \\
         &= \frac{1}{\sigma_*^2} \min_{\tau\in[0,2\pi]} \sum_{j\geq1} \nu_j \Big| \big(c_j- e^{-\ii j\tau}\cdiese_j) +
            (\sigma\epsilon_j- e^{-\ii j\tau}\sigma^\diese\epsdiese_j\big)\Big|^2 \\
         &\geq \frac{1}{\sigma_*^2} \min_{\tau\in[0,2\pi]} \Big\{ \sum_{j\geq1} \nu_j |c_j- e^{-\ii j\tau}\cdiese_j|^2\Big\}
               -\frac{2}{\sigma_*} \max_{\tau\in[0,2\pi]} \Big\{\sum_{j\geq1} \nu_j |c_j- e^{-\ii j\tau}\cdiese_j|\cdot
               (|\epsilon_j|+ |\epsdiese_j|) \Big\}.
\end{align*}
Let us focus on the first term. Denoting $\delta_{\sigma_*}=\min\{j\geq1,\nu_j<\overline c\}$, we get by condition {\bf (C)} that $\delta_{\sigma_*}\to+\infty$, which implies 	
\begin{align*}
\min_{\tau\in[0,2\pi]} \sum_{j\geq1} \nu_j |c_j- e^{-\ii j\tau}\cdiese_j|^2
         &\geq {\overline c} \min_{\tau\in[0,2\pi]}\sum_{j=1}^{\delta_{\sigma_*}} |c_j- e^{-\ii j\tau}\cdiese_j|^2 \\
         &\geq {\overline c} \Big( \min_{\tau\in[0,2\pi]} \sum_{j=1}^{+\infty} |c_j- e^{-\ii j\tau}\cdiese_j|^2 - 4L^2\delta_{\sigma_*}^{-2}\Big)\\
         &\geq {\overline c} \big( \rho - 4L^2\delta_{\sigma_*}^{-2}\big).
\end{align*}
For the second term, we use that for every $\tau$ it holds that
\begin{align*}
\sum_{j\geq1} \nu_j \big|c_j- e^{-\ii j\tau}\cdiese_j\big|\cdot\big|\epsilon_j- e^{-\ii j\tau}\epsdiese_j\big|
&\leq \max_{j=1,\ldots,N_{\sigma_*}} (|\epsilon_j|\vee |\epsilon_j^\diese|) \sum_{j=1}^{N_{\sigma_*}} \Big(|c_j|+|c_j^\diese|\Big) \\
&= O_P(\sqrt{\log N_{\sigma_*}})\sum_{j=1}^{N_{\sigma_*}} \Big(|c_j|+|c_j^\diese|\Big).
\end{align*}
Furthermore, using the Cauchy-Schwarz inequality, we get
\begin{align*}
\sum_{j=1}^{N_{\sigma_*}} \Big(|c_j|+|c_j^\diese|\Big)\le \bigg(\sum_{j=1}^{N_{\sigma_*}} j^{-2}\bigg)^{1/2}
     \bigg(\sum_{j=1}^{N_{\sigma_*}} j^2 \Big(|c_j|+|c_j^\diese|\Big)^2\bigg)^{1/2}\le 3 L.
\end{align*}
Therefore,
\begin{align*}
\max_{\tau\in[0,2\pi]} \sum_{j\geq 1} \nu_j \big|c_j- e^{-\ii j\tau}\cdiese_j\big|\cdot\big|\epsilon_j- e^{-\ii j\tau}\epsdiese_j\big|
&= O_P(\sqrt{\log N_{\sigma_*}}).
\end{align*}
Combining all these estimates, we get
\begin{align*}
T_{\sigma_*}&=\frac{\Delta_{\bnu,\sigma_*}(\bY^{\bullet,\diese})-\|\bnu\|_1}{\|\bnu\|_2} \geq \frac{\overline{c} \rho-
4L\overline{c}\delta_{\sigma_*}^{-2} -O_P \Big(\sigma_*\sqrt{\log N_{\sigma_*}}\Big)-\sigma_*^2 N_{\sigma_*}}{\sigma_*^2 \sqrt{N_{\sigma_*}}} \convproba +\infty
\end{align*}
in view of the convergences $\delta_{\sigma_*}\to\infty$, $N_{\sigma_*}\to\infty$ and $\sigma^2_*N_{\sigma_*}\to 0$ as $\sigma_*\to 0$.

\section{Proof of Theorem~\ref{th:3}}

Throughout this proof, we assume that $\mathbf{H_0}$ is fulfilled, that is for some $\bar\tau^*\in [0,2\pi[$, the relation $c_j = e^{-\ii j\bar\tau^*} c_j^\diese$ holds for every $j\in\NN$. We also recall that $\sigma$ and $\sigmadiese$ are assumed to be equal.
One easily checks that
\begin{align}\label{pr4:1}
\min_{\tau\in[0,2\pi[} \frac{\|\bY-\ee(\tau)\circ\bY^\diese\|_{2,\nnu}^2}{2(\sigma^2+\sigma^\diese{}^2)}
    &\le \frac{\|\bY-\ee(\bar\tau^*)\circ\bY^\diese\|_{2,\nnu}^2}{2(\sigma^2+\sigma^\diese{}^2)}= \|\nnu\|_1 + \frac12\sum_{j=1}^p \nu_j (|\xi_j|^2-2).
\end{align}
where $\xi_j = (\epsilon_j- e^{-\ii j \bar\tau^*}\epsilon_j^\diese)/\sqrt{2}$ for every $j\in\NN$.
Therefore, using the notation $\hat U_{\sigma_*} = \frac{4\sigma_*^2\|1-\nnu\|_1}{\|\bY\|_{2,1-\nnu}^2+\|\bY^\diese\|_{2,1-\nnu}^2} $
and $Z_{\sigma_*}=\sum_{j=1}^p \frac{\nu_j}{2\|\nnu\|_2} (|\xi_j|^2-2)$, we get
\begin{align}\label{pr4:2}
T(\bY^{\bullet,\diese})
    & =  \frac{\hat U_{\sigma_*}}{\|\nnu\|_2}\min_{\tau\in[0,2\pi[} \frac{\|\bY-\ee(\tau)\circ\bY^\diese\|_{2,\nnu}^2}{4\sigma_*^2}
		-\frac{\|\nnu\|_1}{\|\nnu\|_2} \nonumber\\
    &\le   \hat U_{\sigma_*} \bigg(\frac{\|\nnu\|_1}{\|\nnu\|_2} +
    \sum_{j=1}^p \frac{\nu_j}{2\|\nnu\|_2} (|\xi_j|^2-2)\bigg)-\frac{\|\nnu\|_1}{\|\nnu\|_2} \nonumber\\
    &=  Z_{\sigma_*} +\big(\hat U_{\sigma_*} -1\big)\bigg(\frac{\|\nnu\|_1}{\|\nnu\|_2} +
    Z_{\sigma_*}\bigg).
\end{align}
Since the random variables $Z_{\sigma_*}$ tend in distribution to $\mathcal N(0,1)$ as $\sigma_*\to 0$,
and $\frac{\|\nnu\|_2}{\|\nnu\|_1} \le \frac{\|\nnu\|_2}{\|\nnu\|_2^2} \le (\underline{c} N_{\sigma_*})^{-1/2}$ (the last inequality follows from condition \textbf{(B)}), we arrive at
\begin{align}\label{pr4:3}
T(\bY^{\bullet,\diese})
    &\le Z_{\sigma_*}  +\big(\hat U_{\sigma_*} -1\big)\frac{\|\nnu\|_1}{\|\nnu\|_2}\big(1+O_P(N_{\sigma_*}^{-1/2})\big).
\end{align}
To complete the proof, we study the behavior of $\hat U_{\sigma_*}$ as $\sigma_*\to 0$. To this end, we define
\begin{align*}
Z_{\sigma_*}' & = \frac{(2\|1-\nnu\|_1-\|\eeps\|_{2,1-\nnu}^2)+(2\|1-\nnu\|_1-\|\eeps^\diese\|^2_{2,1-\nnu})}{4\|1-\nnu\|_2},\\
Z_{\sigma_*}''& = \frac{\langle \bc, \eeps + \ee(\bar\tau^*)\circ\eeps^\diese \rangle_{1-\nnu}}{\sqrt{2}\|\bc\|_{2,(1-\nnu)^2}}.
\end{align*}
We have
\begin{align}\label{pr4:4}
\hat U_{\sigma_*}^{-1}
    &= 1+ \frac{\|\bY\|_{2,1-\nnu}^2+\|\bY^\diese\|_{2,1-\nnu}^2 -4\sigma_*^2\|1-\nnu\|_1}{4\sigma_*^2\|1-\nnu\|_1}\nonumber\\
    &= 1- \frac{\|1-\nnu\|_2}{\|1-\nnu\|_1}\;Z_{\sigma_*}' +\frac{\|\bc\|_{2,1-\nnu}^2}{2\sigma_*^2\|1-\nnu\|_1} +
		\frac{\|\bc\|_{2,(1-\nnu)^2} Z_{\sigma_*}'' }{\sqrt{2}\sigma_*\|1-\nnu\|_1}.
\end{align}
To evaluate the right-hand side, we first upper-bound  $\|\bc\|_{2,1-\nnu}^2 = \sum_j (1-\nu_j) |c_j|^2$ by the expression
$\big[\max_{j\ge 1} j^{-2}(1-\nu_j)\big] \sum_j j^2 |c_j|^2 \le c' L N_{\sigma_*}^{-2}$. Taking into account the facts that
$\sigma_*N_{\sigma_*}= O(1)$,  $Z_{\sigma_*}'' \sim \mathcal N(0,2)$ and $\|\bc\|_{2,(1-\nnu)^2}^2 = \sum_j (1-\nu_j)^2 |c_j|^2 \le  \|\bc\|_{2,1-\nnu}^2 \le c' L N_{\sigma_*}^{-2} $, we get
\begin{align*}
\hat U_{\sigma_*}^{-1}
    &= 1- \frac{\|1-\nnu\|_2}{\|1-\nnu\|_1}\;Z_{\sigma_*}' + \frac{O((\sigma_*N_{\sigma_*})^{-2}) + O_P((\sigma_*N_{\sigma_*})^{-1})}{\|1-\nnu\|_1}\nonumber\\
    &= 1- \frac{\|1-\nnu\|_2}{\|1-\nnu\|_1}\;Z_{\sigma_*}'+\frac{O_P(1)}{(\sigma_*N_{\sigma_*})^2\|1-\nnu\|_1}.
\end{align*}
Finally, using the inequality $\|1-\nnu\|_1\ge p-N_{\sigma_*}$, we get that
\begin{align}\label{pr4:5}
\hat U_{\sigma_*}^{-1} &= 1- \frac{\|1-\nnu\|_2}{\|1-\nnu\|_1}Z_{\sigma_*}'+ \frac{O_P(1)}{(p-N_{\sigma_*})(\sigma_*N_{\sigma_*})^2}.
\end{align}
On the other hand, since $\nu_j\in[0,1]$ for every $j$ and $\nu_j=0$ for $j\ge N_{\sigma_*}$, we have
$\|1-\nnu\|_2\le \|1-\nnu\|_1^{1/2}  = \frac{\|1-\nnu\|_1}{\|1-\nnu\|_1^{1/2}} \le \|1-\nnu\|_1/(p-N_{\sigma_*})^{1/2}$.
In particular, relation (\ref{pr4:5}) in conjunction with the condition $(p-N_{\sigma_*})\sigma_*^2N_{\sigma_*}^{3/2}\to+\infty$ implies that
$|\hat U_{\sigma_*}^{-1}-1| \le \frac{\|1-\nnu\|_2}{\|1-\nnu\|_1}\;|Z_{\sigma_*}'|+o_P(N_{\sigma_*}^{-1/2})
= (p-N_{\sigma_*})^{-1/2}O_P(1)+o_P(N_{\sigma_*}^{-1/2})$ and, therefore, $\hat U_{\sigma_*} = 1+ O_P(N_{\sigma_*}^{-1/2})$.
Combining this with (\ref{pr4:5}) and (\ref{pr4:3}), we arrive at
\begin{align*}
T(\bY^{\bullet,\diese})
    &\le  Z_{\sigma_*}  +\hat U_{\sigma_*}\big(1-\hat U_{\sigma_*}^{-1}\big)\frac{\|\nnu\|_1}{\|\nnu\|_2}\big(1+O_P(N_{\sigma_*}^{-1/2})\big)\nonumber\\
    &=  Z_{\sigma_*}  +\big(1-\hat U_{\sigma_*}^{-1}\big)\frac{\|\nnu\|_1}{\|\nnu\|_2}\big(1+O_P(N_{\sigma_*}^{-1/2})\big)\nonumber\\
    &=  Z_{\sigma_*}  +\bigg(\frac{\|1-\nnu\|_2}{\|1-\nnu\|_1}Z_{\sigma_*}'+ \frac{O_P(1)}{(p-N_{\sigma_*})(\sigma_*N_{\sigma_*})^2}\bigg)\frac{\|\nnu\|_1}{\|\nnu\|_2}\big(1+O_P(N_{\sigma_*}^{-1/2})\big)\nonumber\\
    &=  Z_{\sigma_*}  +\bigg(\frac{\|1-\nnu\|_2}{\|1-\nnu\|_1}Z_{\sigma_*}'+ \frac{O_P(1)}{(p-N_{\sigma_*})(\sigma_*N_{\sigma_*})^2} +
    \frac{O_P(1)}{(p-N_{\sigma_*})^{1/2}N_{\sigma_*}^{1/2}}\bigg)\frac{\|\nnu\|_1}{\|\nnu\|_2}\nonumber\\
    &=  Z_{\sigma_*}  +\frac{\|\nnu\|_1\|1-\nnu\|_2}{\|\nnu\|_2\|1-\nnu\|_1}Z_{\sigma_*}'+\frac{O_P(1)}{(p-N_{\sigma_*})\sigma_*^2N_{\sigma_*}^{3/2}}+\frac{O_P(1)}{(p-N_{\sigma_*})^{1/2}}.
\end{align*}
This result, combined with the obvious identity $|\eps_j|^2+|\eps^\diese_j|^2 = \frac12|{\eps_j+e^{-\ii j \bar\tau^*}\eps_j^\diese}|^2
+\frac12|{\eps_j-e^{-\ii j \bar\tau^*}\eps_j^\diese}|^2 = |\xi_j|^2+|\xi_j^\diese|^2$, completes the proof of the theorem.

\section{Bounds for the maxima of random sums}\label{sec:5}

In this section, we will gather some useful technical lemmas. They essentially characterize the stochastic behavior of
the maximum of the sum of independent random quantities, which are either ``simple'' Gaussian processes \citep{Berman88} or scaled
chi-squared random variables. Note that instead of Berman's formula, one could use the combination of the Dudley theorem and Talagrand's inequality  
to control the supremum of the considered random process. Both approaches lead to optimal results. However, the Berman formula is much easier
to apply since it avoids the computation of the covering numbers or other quantities of the same flavor. It should
be however emphasized that this is made possible by the fact that the noise is assumed Gaussian and the sample paths of the process to be maximized 
are continuously differentiable. In a more general situation (non Gaussian noise, nonsmooth sample paths, etc.) one would
most likely have no other choice than applying the Dudley-Talagrand routine.

\begin{proposition}[\cite{Berman88}]\label{prop:5}
Suppose that $g_j$ are continuously differentiable functions satisfying $\sum_{j=1}^n g_j(t)^2 = 1$ for all $t$, and $\xi_j \simiid \calN (0,1)$.
Then, for every $x>0$, we have
\begin{equation*}
\prob\bigg( \sup\limits_{[a,b]} \sum_{j=1}^n g_j(t) \xi_j \geq x\bigg) \leq \frac{L_0}{2\pi}e^{-\frac{x^2}{2}} + \int_x^{+\infty}
\frac{e^{-\frac{t^2}{2}}}{\sqrt{2\pi}}\,dt, \quad\text{with} \quad L_0 = \int_a^b \bigg[{\sum_{j=1}^n g_j'(t)^2}\bigg]^{1/2} \,dt.
\end{equation*}
\end{proposition}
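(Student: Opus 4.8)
The plan is to treat $X(t)=\sum_{j=1}^n g_j(t)\xi_j$ as a smooth centered Gaussian process and to bound the probability that it exceeds the level $x$ by combining the probability of starting above $x$ with the expected number of upcrossings of $x$. The normalization $\sum_{j=1}^n g_j(t)^2=1$ for every $t$ means that the marginal $X(t)$ is standard Gaussian, so $\prob(X(a)\ge x)=\int_x^{+\infty}\frac{e^{-t^2/2}}{\sqrt{2\pi}}\,dt$, which already accounts for the second term in the asserted bound. Everything then reduces to producing the first term, $\frac{L_0}{2\pi}e^{-x^2/2}$.

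First I would decompose the event. Since the $g_j$ are $C^1$ and $n$ is finite, $t\mapsto X(t)$ is almost surely continuous. Hence, if $\sup_{[a,b]}X\ge x$ while $X(a)<x$, the path must cross the level $x$ upward somewhere in $(a,b]$; writing $U_x$ for the number of upcrossings of $x$ in $[a,b]$, this yields
\[
\prob\Big(\sup_{[a,b]}X\ge x\Big)\le \prob(X(a)\ge x)+\prob(U_x\ge 1)\le \prob(X(a)\ge x)+\esp[U_x],
\]
the last inequality being Markov's. So it suffices to show $\esp[U_x]\le \frac{L_0}{2\pi}e^{-x^2/2}$.

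I would then invoke the Kac--Rice formula for the expected number of upcrossings,
\[
\esp[U_x]=\int_a^b \esp\big[(X'(t))^+\,\big|\,X(t)=x\big]\,p_{X(t)}(x)\,dt,
\]
where $X'(t)=\sum_{j=1}^n g_j'(t)\xi_j$ and $p_{X(t)}$ is the density of $X(t)$. The decisive observation, which is what makes the constant $1/(2\pi)$ materialize, is that for each fixed $t$ the pair $(X(t),X'(t))$ is jointly Gaussian with covariance $\sum_{j=1}^n g_j(t)g_j'(t)=\tfrac12\frac{d}{dt}\sum_{j=1}^n g_j(t)^2=0$. Thus $X(t)$ and $X'(t)$ are \emph{independent}, the conditioning is vacuous, and $X'(t)\sim\calN\big(0,\lambda(t)\big)$ with $\lambda(t)=\sum_{j=1}^n g_j'(t)^2$. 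Using $\esp[Z^+]=1/\sqrt{2\pi}$ for $Z\sim\calN(0,1)$ together with $p_{X(t)}(x)=\frac{1}{\sqrt{2\pi}}e^{-x^2/2}$, the integrand equals $\frac{1}{2\pi}\lambda(t)^{1/2}e^{-x^2/2}$, and integrating over $[a,b]$ returns exactly $\frac{L_0}{2\pi}e^{-x^2/2}$, completing the bound.

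The main obstacle is the rigorous justification of the Kac--Rice formula rather than the algebra, which is elementary once the independence of $X(t)$ and $X'(t)$ is noted. One must verify the regularity hypotheses: almost-sure $C^1$ sample paths (immediate, as $X$ is a finite linear combination of $C^1$ functions), nondegeneracy of the marginals (guaranteed by the unit-variance normalization), and the fact that level tangencies occur with probability zero, so that upcrossings are a.s.\ isolated and correctly counted. The points where $\lambda(t)=0$ cause no difficulty, since the integrand simply vanishes there. Once these points are secured, the constant-variance constraint does all the work and the stated inequality follows.
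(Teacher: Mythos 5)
The paper does not actually prove this proposition: it is imported verbatim from Berman's 1988 paper and used as a black box, so there is no in-paper argument to compare yours against. That said, your Rice/Kac--Rice upcrossing argument is the classical derivation of exactly this bound and it is correct. The two key points are both in place: the event decomposition $\{\sup_{[a,b]}X\ge x\}\subseteq\{X(a)\ge x\}\cup\{U_x\ge 1\}$ followed by Markov's inequality, and the observation that the constant-variance normalization forces $\cov(X(t),X'(t))=\tfrac12\frac{d}{dt}\sum_j g_j(t)^2=0$, so that the conditional expectation in the Kac--Rice integrand factors and yields $\frac{1}{2\pi}\big(\sum_j g_j'(t)^2\big)^{1/2}e^{-x^2/2}$, integrating to $\frac{L_0}{2\pi}e^{-x^2/2}$. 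The remaining work is the standard regularity bookkeeping for Kac--Rice (a.s.\ $C^1$ paths, nondegenerate marginals, null probability of tangencies, and handling of the set where $\sum_j g_j'(t)^2=0$, e.g.\ by using the Kac--Rice \emph{upper bound} version, which always holds), all of which you correctly identify; one may also pass from $\sup>x$ to $\sup\ge x$ by continuity of the bound in $x$. So the proposal is sound and is, in substance, the proof one would find in Berman's source.
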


We will also use the following fact about moderate deviations of the random variables that can be written as the sum of squares of independent centered Gaussian
random variables.

\begin{lemma}\label{lem:2}
Let $N$ be some positive integer and let $\etadiese_j$, $j=1,\ldots,N$ be independent complex valued random variables such that their real
and imaginary parts are independent standard Gaussian variables. Let $\bs=(s_1,\ldots,s_N)$ be a vector of real numbers. For any
$y\ge 0$, it holds that
\begin{equation*}
\prob\Big\{\sum_{j=1}^N s_j^2|\etadiese_j|^2\ge 2\|\bs\|_2^2+2\sqrt{2} \|\bs\|_4^2y+2\|\bs\|_\infty^2y^2\Big\}\le e^{-y^2/2},
\end{equation*}
with the standard notations $\|\bs\|_\infty=\max\limits_{j=1,...,N} |s_j|$ and $\|\bs\|_q^q=\sum_{j=1}^N |s_j|^q$.
\end{lemma}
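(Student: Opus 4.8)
The plan is to apply the Laplace-transform (Chernoff) method after reducing the complex chi-square to a real one. Since the real and imaginary parts of each $\etadiese_j$ are independent $\calN(0,1)$ variables, I would write $|\etadiese_j|^2=\xi_j^2+\tilde\xi_j^2$ with $\xi_j,\tilde\xi_j$ i.i.d.\ standard Gaussian, so that $Z:=\sum_{j=1}^N s_j^2|\etadiese_j|^2=\sum_{i=1}^{2N}a_iG_i^2$, where the $G_i$ are $2N$ i.i.d.\ $\calN(0,1)$ variables and the nonnegative weights $a_i$ are the $s_j^2$ each taken twice. The three quantities that enter the bound are then $\sum_i a_i=2\|\bs\|_2^2$, $\sum_i a_i^2=2\|\bs\|_4^4$ and $\max_i a_i=\|\bs\|_\infty^2$. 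In particular $\esp Z=2\|\bs\|_2^2$, so the assertion is a one-sided deviation bound above the mean, exactly the regime for the exponential method.

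Next I would compute the moment generating function. For a single standard Gaussian, $\esp\, e^{\lambda G^2}=(1-2\lambda)^{-1/2}$ for $\lambda<1/2$, hence by independence $\esp\, e^{\lambda Z}=\prod_{i=1}^{2N}(1-2\lambda a_i)^{-1/2}$ whenever $0<\lambda<1/(2\max_i a_i)$, and Markov's inequality gives $\prob(Z\ge z)\le\exp\{-\lambda z-\tfrac12\sum_i\log(1-2\lambda a_i)\}$ for any threshold $z$. To obtain a Bernstein-type expression I would use the elementary inequality $-\tfrac12\log(1-x)\le\tfrac{x}{2}+\tfrac{x^2}{4(1-x)}$, valid for $0\le x<1$ (compare power series), with $x=2\lambda a_i$, and then bound each denominator from below by $1-2\lambda\max_i a_i$. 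This yields $\log\esp\, e^{\lambda Z}\le 2\lambda\|\bs\|_2^2+\frac{2\lambda^2\|\bs\|_4^4}{1-2\lambda\|\bs\|_\infty^2}$.

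It remains to insert the prescribed threshold $z=2\|\bs\|_2^2+2\sqrt2\,\|\bs\|_4^2\,y+2\|\bs\|_\infty^2y^2$, so that the terms $2\lambda\|\bs\|_2^2$ cancel and one is left with $\log\prob(Z\ge z)\le-\lambda y(2\sqrt2\,\|\bs\|_4^2+2\|\bs\|_\infty^2y)+\frac{2\lambda^2\|\bs\|_4^4}{1-2\lambda\|\bs\|_\infty^2}$. Choosing $\lambda=y/(2\sqrt2\,\|\bs\|_4^2+2\|\bs\|_\infty^2y)$ makes the first term equal to $-y^2$, keeps $2\lambda\|\bs\|_\infty^2$ strictly below $1$, and reduces the second term to $\frac{y^2\|\bs\|_4^2}{\sqrt2\,(2\sqrt2\,\|\bs\|_4^2+2\|\bs\|_\infty^2y)}\le y^2/4$; the exponent is therefore at most $-y^2+y^2/4=-3y^2/4\le-y^2/2$, which is the claim.

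The step I expect to be the most delicate is the final verification, because the threshold encodes two regimes: for moderate $y$ the quadratic proxy $\|\bs\|_4^4$ governs the deviation (Gaussian regime), whereas for large $y$, or when $\bs$ concentrates on a single coordinate, the $\|\bs\|_\infty^2y^2$ term and the factor $(1-2\lambda\|\bs\|_\infty^2)^{-1}$ dominate (sub-exponential regime). The generosity of the constants $2\sqrt2$ and $2$, relative to the sharp Laurent--Massart values $2$ and $1$, is exactly what lets the single explicit choice of $\lambda$ above control both regimes without a separate optimization; alternatively, one may simply invoke the Laurent and Massart deviation inequality for weighted sums of squared Gaussians, whose threshold is dominated term by term by the one stated here.
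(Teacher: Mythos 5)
Your proof is correct, but it takes a different route from the paper: the paper's entire proof is a one-line citation of Lemma~1 of Laurent and Massart, whereas you rederive the bound from scratch by the Chernoff method. I checked your computation: the reduction $\sum_j s_j^2|\etadiese_j|^2=\sum_{i=1}^{2N}a_iG_i^2$ with $\sum_i a_i=2\|\bs\|_2^2$, $\sum_i a_i^2=2\|\bs\|_4^4$, $\max_i a_i=\|\bs\|_\infty^2$ is right; the inequality $-\tfrac12\log(1-x)\le \tfrac{x}{2}+\tfrac{x^2}{4(1-x)}$ holds term by term in the power series; and with $\lambda=y\big(2\sqrt2\,\|\bs\|_4^2+2\|\bs\|_\infty^2y\big)^{-1}$ one indeed has $2\lambda\|\bs\|_\infty^2<1$ and an exponent bounded by $-3y^2/4\le -y^2/2$. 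Your closing observation is exactly the paper's argument: Laurent--Massart applied with $x=y^2/2$ gives the threshold $2\|\bs\|_2^2+2\|\bs\|_4^2y+\|\bs\|_\infty^2y^2$ (since $\|\ba\|_2=\sqrt2\,\|\bs\|_4^2$), which is dominated coordinatewise by the one in the statement. What your version buys is self-containedness and an explanation of where the slack in the constants $2\sqrt2$ and $2$ is spent; what the citation buys is brevity and sharper constants. The only caveat, shared by the lemma as stated and by the cited reference, is the degenerate case $\bs=0$, for which the event has probability one; this is harmless since the lemma is only ever applied with nonzero weights.
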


\begin{proof}
This is a direct consequence of \cite[Lemma 1]{LaurentMassart}.
\end{proof}

\begin{proof}[Proof of Lemma~\ref{lem:2.5}]
We apply Proposition~\ref{prop:5} to the functions $\{g_j(t)\}_{j=1,\ldots,2N}$ defined on the interval $[a,b]=[0,2\pi]$ by
$g_j(t)=[s_j\cos(jt)\fcar(j\le N)+s_j\sin(jt)\fcar(j>N)]/\|\bs\|_2$. One easily checks that for every $t\in[0,2\pi]$,
we have $\sum_{j=1}^{2N} g_j^2(t)=1$. Therefore, applying Berman's result we get
$$
\prob\big(\|Z\|_\infty \ge \|\bs\|_2 x\big) = \prob\Big(\sup_{t\in[0,2\pi]} \sum_{j\le N} g_j(t)\xi_j+\sum_{j>N} g_j(t)\xi_j'\ge x\Big)\le
\Big(\frac{L_0}{2\pi}+1\Big)e^{-x^2/2}.
$$
In the present context, the constant $L_0$ admits the following simple upper bound:
$$
L_0=\int_0^{2\pi} \Big[\sum_{j=1}^N j^2 s_j^2/\|\bs\|_2^2 \Big]^{1/2}\,dt \le 2\pi N,
$$
which yields the desired result.
\end{proof}	

\begin{proof}[Proof of Lemma~\ref{lem:5}]
First note that we can not directly apply Berman's formula, since the summands are not Gaussian. However, they are conditionally Gaussian if the conditioning is done, for example, with respect to the sequence $\{\etadiese_j\}_{j=1,\ldots,N}$. Indeed, one easily checks that conditionally to $\{\etadiese_j\}_{j=1,\ldots,N}$, the random processes
\begin{equation*}
\tau\mapsto \sum\limits_{j=1}^{N}  s_j \Re\big(e^{\ii j\tau}\eta_j\etadiese_j\big)
\end{equation*}
and
$$
\tau\mapsto \sum\limits_{j=1}^{N}  s_j |\etadiese_j| \big( \cos(j\tau)\xi_j - \sin(j\tau) \xi'_j\big)\quad \text{ with $\xi_j,\xi'_j \simiid \Nzeroun$}
$$
have the same distributions.  Therefore, it follows from Lemma~\ref{lem:2.5} that
\begin{equation*}
\prob\bigg( \sup\limits_{[0,2\pi]} \Big|\sum_{j=1}^N s_j \Re\big(e^{\ii j\tau}\eta_j\etadiese_j\big)\Big| \geq x \Big(\sum_{j=1}^N s_j^2|\etadiese_j|^2\Big)^\frac{1}{2}\Big| \{\etadiese_j\}_{j=1,\ldots,N}\bigg) \leq (N+1)e^{-x^2/2}.
\end{equation*}
Let us now denote by $\zeta$ the square root of the random variable $\sum_{j=1}^N s_j^2|\etadiese_j|^2$.  It is clear that for all $a>0$,
\begin{align*}
\prob\big(\|Z\|_\infty\ge ax\big) &= \prob\big(\|Z\|_\infty\ge ax;\;\zeta \le a\big)+\prob\big(\|Z\|_\infty\ge ax;\;\zeta > a\big)\\
													&\le \prob\big(\|Z\|_\infty\ge x\zeta\big)+\prob\big(\zeta > a\big)\\
													&\le (N+1)e^{-x^2/2}+\prob\big(\zeta > a\big).
\end{align*}
To complete the proof, it suffices to replace $a$ by $\sqrt{2}(\|\bs\|_2+y\|\bs\|_\infty)$ and to apply Lemma~\ref{lem:2} along with the
inequalities $\|\bs\|_2+\|\bs\|_\infty y= (\|\bs\|_2^2+2\|\bs\|_\infty\|\bs\|_2y+\|\bs\|_\infty^2y^2)^{1/2}\ge (\|\bs\|_2^2+\sqrt{2}\|\bs\|_4^2y+\|\bs\|_\infty^2y^2)^{1/2}$.
\end{proof}


\section*{Acknowledgments} This work has been partially supported by the ANR grant Callisto and the
grant Investissements d'Avenir (ANR-11-IDEX-0003/Labex Ecodec/ANR-11-LABX-0047).

\section*{References}

\bibliography{bibliographie}

\end{document}